\date{June 18, 2021}
\newtheorem{theorem}{Theorem}
\newtheorem{remark}[theorem]{Remark}
\newtheorem{proposition}[theorem]{Proposition}
\newtheorem{lemma}[theorem]{Lemma}
\newtheorem{definition}[theorem]{Definition}
\newtheorem{corollary}[theorem]{Corollary}
\newcommand{\beq}{\begin{equation}}
\newcommand{\eeq}{\end{equation}}
\newcommand{\ben}{\begin{eqnarray}}
\newcommand{\een}{\end{eqnarray}}
\newcommand{\beno}{\begin{eqnarray*}}
\newcommand{\eeno}{\end{eqnarray*}}
\numberwithin{equation}{section}
\begin{document}
\title[Partial Regularity for the 3D chemotaxis-Navier-Stokes equations]{\bf  Hausdorff measure for the singularity set of the 3D chemotaxis-Navier-Stokes equations}
\author{Xiaomeng~Chen}
\address[Xiaomeng~Chen]{School of Mathematical Sciences, Dalian University of Technology, Dalian, 116024,  China}
\email{cxm@mail.dlut.edu.cn}

\author{Shuai~Li}
\address[Shuai~Li]{School of Mathematical Sciences, Dalian University of Technology, Dalian, 116024,  China}
\email{leeshy@mail.dlut.edu.cn}

\author{Lili~Wang}
\address[Lili~Wang]{School of Mathematical Sciences, Dalian University of Technology, Dalian, 116024,  China}
\email{wanglili\_@mail.dlut.edu.cn}

\author{Wendong~Wang}
\address[Wendong~Wang]{School of Mathematical Sciences, Dalian University of Technology, Dalian, 116024,  China}
\email{wendong@dlut.edu.cn}
\date{\today}
\maketitle
\begin{abstract}
Suspensions of aerobic bacteria often develop flows from the interplay of chemotaxis and buoyancy, which is so-called the chemotaxis-Navier-Stokes flow.  In 2004, Dombrowski et al. observed that Bacterial flow in a sessile drop related to those in the Boycott effect of sedimentation can carry bioconvective plumes, viewed from
below through the bottom of a petri dish, and the horizontal ``turbulence'' white line near the top is the air-water-plastic contact line. In pendant drops such self-concentration occurs at the bottom. On scales much larger than a cell, concentrated regions  exhibit  transient, reconstituting, high-speed jets straddled by vortex streets. It's interesting to verify these turbulent phenomena mathematically. In this note, we investigate the  Hausdorff dimension of these vortices (singular points) by considering partial regularity of weak solutions of the three dimensional  chemotaxis-Navier-Stokes equations, and showed that the singular dimension is not larger than $1$, which seems to  be consistent with the  linear singularity in the experiment.

\end{abstract}

{\small {\bf Keywords:} chemotaxis-Navier-Stokes, suitable weak solution,  Hausdorff measure, partial regularity}

\section{Introduction}	

There is a long research history on the Hausdorff measure of the singularity set of weak solutions to certain fluid models. As is well-known, the set of singular points of Leray-Hopf weak solutions of Navier-Stokes equations, 
has been widely studied.  For the suitable weak solutions (a subset of Leray-Hopf weak solutions) of Navier-Stokes equations, it was started by Scheffer in \cite{SV1,SV2,SV4}, and later Caffarelli-Kohn-Nirenberg \cite{CKN} showed that the set $\mathcal{S}$ of possible interior singular points of a suitable weak solution is one-dimensional parabolic Hausdorff measure zero. These partial regularity results have an interesting consequence in the
context of the experimental study of {\it turbulence}, since one can  relate the singular
set of a flow to its turbulence region (see \cite{Tsai-2018}).

The turbulence phenomena also happened in the experiment of chemotaxis fluids.
Consider a PDE model on $Q_T=\mathbb{R}^3\times(0,T)$ describing the dynamics of oxygen,
swimming bacteria, and viscous incompressible fluids, which was proposed by Tuval et al.\cite{ILCJR 2005} as follows:
\begin{eqnarray}\label{eq:GKS}
 \left\{
    \begin{array}{llll}
    \displaystyle \partial_t n+u\cdot \nabla n-\Delta n=-\nabla\cdot(\chi(c)n\nabla c),\\
    \displaystyle \partial_t c+ u\cdot \nabla c-  \Delta c=-\kappa(c)n, \\
    \displaystyle \partial_t u+ u\cdot \nabla u-\Delta u+\nabla P =-n\nabla \phi,~~\nabla\cdot u=0,
    \end{array}
 \right.
\end{eqnarray}
where $\mathbb{R^{+}}=(0,+\infty)$, $c(x,t):Q_T\rightarrow{\mathbb R}^{+}$, $n(x,t):Q_T\rightarrow\mathbb{R}^{+}$, $u(x,t):Q_T\rightarrow\mathbb{R}^{3}$ and $P(x,t):Q_T\rightarrow\mathbb{R}$ denote the oxygen concentration, cell concentration, the fluid velocity and the associated pressure, respectively. Moreover, the
gravitational potential $\phi$, the chemotactic sensitivity $\chi(c)\geq 0$ and the per-capita oxygen
consumption rate $\kappa(c)\geq 0$ are supposed to be sufficiently smooth given functions. 
Dombrowski et al. observed in \cite{DCCGK2004} (see also \cite{ILCJR 2005}) in the experiment: Bacterial ``turbulence'' in a sessile drop lies in the air-water-plastic contact line. The central
fuzziness is due to collective motion, not quite captured at the
frame rate of $\frac{1}{30}$ s. In pendant drops, a fluctuation increasing
the local concentration leads to a jet descending faster
than its surroundings, which entrains nearby fluid to
produce paired, oppositely signed vortices.
  It's interesting to verify these turbulent phenomena mathematically. Motivated by partial regularity theory of Caffarelli-Kohn-Nirenberg \cite{CKN}, global suitable weak solution was constructed in a previous paper of the authors \cite{CLWW2023}, and this article is aimed to describe the properties of singularities. 

First, let us briefly review some well-posed results for the system (\ref{eq:GKS}). Global classical solutions near constant steady states were constructed for the full
chemotaxis-Navier-Stokes system by Duan-Lorz-Markowich in \cite{DLM2010} with small data. In \cite{AL2010}, for the case of bounded domain of $\mathbb{R}^n$ with $n= 2,3$, the local existence of weak solutions for problem (\ref{eq:GKS}) is obtained by Lorz. By assuming $\chi',\kappa'\geq 0$ and $\kappa(0)=0$, local well-posed results and blow-up criteria were established by Chae-Kang-Lee in \cite{CKL2013}. 
For the two-dimensional system  of (\ref{eq:GKS}), the system is better understood. Liu and Lorz \cite{LL2011}  proved the global existence of weak solutions to the two-dimensional system  of (\ref{eq:GKS}) for arbitrarily large initial data, under the assumptions on $\chi$ and $f$ made in \cite{DLM2010}.
For more developments, we refer to \cite{Winkler2012, CKL2014,Winkler2016,HZ2017,WWX2018,KLW2022,WWX2018-2,WZZ2021} and the references therein. 

Second, global weak solutions of Leray-Hopf type for this system were obtained in 2D and 3D by Zhang-Zheng \cite{ZZ2014}, He-Zhang  \cite{HZ2017}, and   Kang-Lee-Winkler \cite{KLW2022}, respectively, where they established a priori estimate
\ben\label{eq: a priori-zhang}
\mathcal{U}(t) + \int_0^t \mathcal{V}(t) d\tau \leq C e^{Ct},
\een
where
\beno
\mathcal{U} = \|n\|_{L^1 \cap L \log L} + \|\nabla \sqrt c\|_{L^{2}}^2 + \|u\|_{L^{2}}^2,
\eeno
and
\beno
\mathcal{V} = \|\nabla \sqrt{n+1}\|_{L^{2}}^2 + \|\Delta \sqrt c\|_{L^{2}}^2 + \|\nabla u\|_{L^{2}}^2 + \int_{\mathbb{R}^d} (\sqrt{c})^{-2} |\nabla \sqrt c|^4 dx + \int_{\mathbb{R}^d} n |\nabla \sqrt c|^2 dx,
\eeno
where $d=2,3$.
However, up to now more information about these weak solutions is still unknown,  especially for the interior singular vortices as described in \cite{DCCGK2004} or the self-organized generation of a persistent hydrodynamic vortex that traps cells near the contact line (see \cite{ILCJR 2005}).
Motivated by  \cite{CKN} and \cite{DD},
partial regularity of local strong solutions was first investigated in \cite{CLW 2022} by Chen-Li-Wang, where they considered the simplified 3D chemotaxis-Navier-Stokes equations ($\kappa(c)=c$, $\chi(c)=1$ ) at the first blow-up time and obtained  the possible singular set has zero $\frac53$-dimensional Hausdorff measure. For the general system $\eqref{eq:GKS}$, 
global suitable weak solutions were constructed under the following certain assumptions about $ \chi $ and $\kappa$ in \cite{CLWW2023}:
\ben\label{ine:chi}
\chi(s) \in C^2(\overline{\mathbb{R^{+}}});~~~~~\chi(s)\geq 0;
\een
\ben\label{ine:kappa}
\kappa(s) \in C^2(\overline{\mathbb{R^{+}}}),~~~~\kappa(0)=0,~~~~ \kappa'(s)\geq 0,~~~~\kappa''(s)\geq 0;
\een
and
\ben\label{equ:chi kappa}
\kappa(s)=\Theta_0s\chi(s),
\een
with $ \Theta_0 >0$ is a positive absolute constant. In details, the existence theorem is stated as follows:
\begin{theorem}[Theorem 1.2 in \cite{CLWW2023}]\label{suitable weak solution}
	Assume that the initial data  $(n_0,c_0,u_0)$ satisfies
	\ben\label{ine:initial condition assumption 1}
	\left\{
	\begin{array}{llll}
		\displaystyle n_0 \in L^1(\mathbb{R}^3), \quad (n_0+1) \ln (n_0+1)  \in L^1(\mathbb{R}^3),  \quad u_0 \in L^2_\sigma(\mathbb{R}^3);\\
		\displaystyle \nabla \sqrt{c_0} \in L^2(\mathbb{R}^3), \quad c_0 \in L^1 \cap L^\infty(\mathbb{R}^3);\\
		\displaystyle n_0 \geq 0, \quad c_0 \geq 0.\\
	\end{array}
	\right.
	\een
	Moreover, $\nabla \phi\in L^\infty(\mathbb{R}^3)$, $\kappa$ and $\chi$ satisfy $\eqref{ine:chi}$, $\eqref{ine:kappa}$ and $ \eqref{equ:chi kappa}. $
	Then there exists a global suitable weak solution of the system (\ref{eq:GKS}). 
\end{theorem}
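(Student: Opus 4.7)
My plan is to construct the solution by a regularization / vanishing-viscosity approach, exploiting the key cancellation that follows from the structural relation $\kappa(s)=\Theta_0 s\chi(s)$. First I would introduce an approximate system $(n^\varepsilon,c^\varepsilon,u^\varepsilon)$ by mollifying the initial data and either inserting an $\varepsilon$-regularization in the transport velocity (via convolution with a Friedrichs mollifier applied to $u^\varepsilon$) or by truncating the chemotactic coupling through a cutoff on $n$. On each approximate system one obtains global smooth solutions through a fixed-point argument on a short time interval combined with the global a priori bounds derived below, or alternatively via a Galerkin scheme for the fluid equation coupled with standard parabolic existence for $(n^\varepsilon,c^\varepsilon)$.

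Next I would establish the uniform a priori estimate of the form (\ref{eq: a priori-zhang}). The $L^1$ mass of $n$ is conserved because the $n$-equation is in divergence form, and the maximum principle for the $c$-equation (using $\kappa(c)n\ge 0$) yields $\|c(t)\|_{L^\infty}\le\|c_0\|_{L^\infty}$. The heart of the matter is a combined entropy / energy identity obtained by testing the $n$-equation against $\ln(n+1)$, the $c$-equation against something like $-\Delta\sqrt c/\sqrt c$ (equivalently, multiplying the $\sqrt c$-equation by $-\Delta\sqrt c$), and the momentum equation against $u$. The incompressibility $\nabla\cdot u=0$ kills the fluid transport terms, while the chemotactic flux $\chi(c)n\nabla c$ and the oxygen consumption $\kappa(c)n$ produce two cross-terms that combine under the identity $\kappa(s)=\Theta_0 s\chi(s)$ into a definite-sign dissipation of the form $\int \chi(c)\bigl|\sqrt{c}\,\nabla\sqrt{n+1}-\Theta_0\sqrt{n+1}\,\nabla\sqrt c\,\bigr|^2\,dx$ (or a close variant), yielding exactly the dissipation quantity $\mathcal V$ in (\ref{eq: a priori-zhang}).

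With these uniform bounds in hand I would pass to the limit via an Aubin--Lions compactness argument: the $L^2_t H^1_x$ bound on $u$ together with an $L^{4/3}_t H^{-s}_x$ bound on $\partial_t u$ coming from the momentum equation gives strong $L^2_{t,x}$-convergence of $u^\varepsilon$; similarly $\sqrt{c^\varepsilon}$ converges strongly in $L^2_t H^1_x$, and $n^\varepsilon$ in a suitable $L^p$-space with $p>1$ by combining the $L\log L$ bound with the $L^2_t H^1_x$-control of $\sqrt{n^\varepsilon+1}$. These convergences are enough to handle every nonlinear term, including the chemotactic flux, which one rewrites as $\chi(c^\varepsilon)n^\varepsilon\cdot 2\sqrt{c^\varepsilon}\,\nabla\sqrt{c^\varepsilon}$ and controls via $c^\varepsilon\in L^\infty_{t,x}$.

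Finally, to upgrade the weak solution to a \emph{suitable} one, I would derive the local energy inequality by testing the approximate momentum equation against $\varphi^2 u^\varepsilon$ for a non-negative cutoff $\varphi$, integrating, and passing to the limit using strong $L^2$-convergence of $u^\varepsilon$; the forcing term $\int \varphi^2 n^\varepsilon\nabla\phi\cdot u^\varepsilon$ is controlled by $\nabla\phi\in L^\infty$ together with an interpolation bound on $n^\varepsilon u^\varepsilon$ coming from $n^\varepsilon\in L^\infty_t L^1_x$ combined with the higher integrability provided by $\mathcal V$, and $u^\varepsilon\in L^2_t L^6_x$. Analogous localizations give local entropy / energy inequalities for $(n^\varepsilon,c^\varepsilon)$. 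The main obstacle throughout is precisely this chemotaxis--consumption cancellation: only under the algebraic constraint $\kappa(s)=\Theta_0 s\chi(s)$ do the bad cross-terms combine into a coercive dissipation, so both the global a priori estimate (\ref{eq: a priori-zhang}) and its localized counterpart required for suitability stand or fall on this identity; once that structural cancellation is secured, the remaining steps are standard in the Caffarelli--Kohn--Nirenberg framework.
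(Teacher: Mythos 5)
The first thing to note is that this paper does not actually prove the statement you were asked about: Theorem \ref{suitable weak solution} is imported verbatim from the authors' earlier work \cite{CLWW2023} (it is literally labeled ``Theorem 1.2 in \cite{CLWW2023}''), and the present article only uses it as input for the partial regularity analysis. So there is no in-paper proof to compare against; I can only judge your outline on its own terms and against what the cited construction must accomplish, namely Definition \ref{sws}.

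Your overall route (mollify/regularize, prove the global entropy--energy estimate of type (\ref{eq: a priori-zhang}) by exploiting the structural identity $\kappa(s)=\Theta_0 s\chi(s)$ together with $\|c\|_{L^\infty}\le\|c_0\|_{L^\infty}$, pass to the limit by Aubin--Lions, then localize) is the natural one and correctly identifies the key cancellation. The genuine gap is in the last step. ``Suitable'' here does not mean the Navier--Stokes-type inequality obtained by testing the momentum equation with $\varphi^2 u$ plus unspecified ``analogous localizations'': it means the single coupled inequality (\ref{local energy inequality}), in which the entropy $\int n\ln n\,\psi$, the quantity $\frac{2}{\Theta_0}\int|\nabla\sqrt c|^2\psi$, and the weighted fluid energy $\frac{18}{\Theta_0}\|c_0\|_{L^\infty}\int|u|^2\psi$ must be combined with exactly these $\Theta_0$- and $\|c_0\|_{L^\infty}$-dependent weights so that, after the cutoff breaks the exact cancellations, the surviving cross terms are precisely the ones retained on the right-hand side (e.g. $\int n\chi(c)\nabla c\cdot\nabla\psi$, $\int n\ln n\,\chi(c)\nabla c\cdot\nabla\psi$, $\int(P-\bar P)u\cdot\nabla\psi$, $\int n\nabla\phi\cdot u\,\psi$), while the dissipative terms $\int|\Delta\sqrt c|^2\psi$ and $\int(\sqrt c)^{-2}|\nabla\sqrt c|^4\psi$ keep a good sign. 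Your plan does not explain how these weights are fixed, nor how to handle the two features the authors themselves flag as the difficulty: the first term $\int n\ln n\,\psi$ is not sign-definite (so the local inequality is only of ``weak form'') and is not scale-invariant. In addition, suitability requires $P\in L^{3/2}_{\rm loc}$ and the presence of the pressure term $\int(P-\bar P)u\cdot\nabla\psi$ in the limit, which demands a Calder\'on--Zygmund/Riesz-potential estimate for the approximate pressure (including the contribution of $n\nabla\phi$) and strong $L^3_{\rm loc}$ convergence of $u^\varepsilon$, plus strong convergence of $n^\varepsilon\ln n^\varepsilon$ and $\nabla\sqrt{c^\varepsilon}$ in the localized terms; these are stated only implicitly in your compactness step. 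In short, the skeleton is right and very likely matches the cited paper's strategy, but the part that actually makes the limit a \emph{suitable} weak solution in the sense of Definition \ref{sws} --- deriving the specific coupled, weighted, weak-form local energy inequality (\ref{local energy inequality}) and passing it to the limit --- is treated as routine when it is the heart of the matter.
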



The  suitable weak solutions is defined as follows:
\begin{definition}[Definition 1.1 in \cite{CLWW2023}]\label{sws}
	A triplet $(n, c, u)$ is called a suitable weak solution of the system (\ref{eq:GKS}) with the initial data satisfies (\ref{ine:initial condition assumption 1}) in $\mathbb{R}^3\times (0,T)$, if the following holds:\\
	
	(i) For any bounded domain $\Omega \subset \mathbb{R}^3$, \\
	
	$n, n \ln n \in L^\infty_{\rm loc}((0,T);L^1(\Omega))$, $\nabla \sqrt{n} \in L^2_{\rm loc}((0,T);L^2(\Omega))$, \\
	
	$\nabla \sqrt{c} \in L^\infty_{\rm loc}((0,T);L^2(\Omega)) \cap L^2_{\rm loc}((0,T);H^1(\Omega))$, \\
	
	$u\in L_{\rm loc}^\infty((0,T);L^2(\Omega)) \cap L^2_{\rm loc}((0,T);H^1(\Omega))$, $P \in L^\frac32_{\rm loc}((0,T);L^\frac32(\Omega))$; \\
	
	(ii) $(n, c, u)$ solves (\ref{eq:GKS}) 
	in the sense of distributions;\\
	
	(iii) For any $t<T$, $(n, c, u)$ satisfies the following energy inequality: 
	\begin{equation*}
		\begin{aligned}
		&||u||_{L^{2}}^{2} + \int_0^t||\nabla u(t)||_{L^{2}}^{2}\\&+~\int_{\mathbb{R}^3} (n+1) \ln (n+1)(\cdot,t) + \int_0^t\int_{\mathbb{R}^3} |\nabla \sqrt{n+1}|^2\\
		&+~ \frac2{\Theta_0}||\nabla \sqrt{c}||_{L^{2}}^{2} +\frac4{3\Theta_0}\int_0^t||\nabla^2 \sqrt{c}||_{L^{2}}^{2} + \frac1{3\Theta_0} \int_0^t \int_{\mathbb{R}^3}(\sqrt{c})^{-2} |\nabla \sqrt{c}|^4  \\
		\leq ~&
		C(\|\nabla \phi\|_{L^\infty},\|n_0\|_{L^1},\|c_{0}\|_{L^{\infty}\cap L^{1}},\|u_{0}\|_{L^{2}},\|(n_{0}+1)\ln (n_{0}+1)\|_{L^{1}},\|\nabla \sqrt{c_0}\|_{L^{2}})(1+t);
		\end{aligned}
	\end{equation*}
	(iv) For any $t<T$, $(n, c, u)$ satisfies the local energy inequality:
\begin{equation}\label{local energy inequality}
\begin{aligned}
&\int_{\Omega} (n \ln n \psi)(\cdot,t) + 4 \int_{(0,t)\times\Omega} |\nabla \sqrt{n}|^2 \psi+\frac{2}{\Theta_0}  \int_{\Omega} (|\nabla \sqrt{c}|^2 \psi)(\cdot,t)\\&+\frac{4}{3\Theta_0}\int_{(0,t)\times\Omega} |\Delta \sqrt{c}|^2 \psi
+\frac{18}{\Theta_0}\|{c_{0}}\|_{L^{\infty}}\int_{\Omega}(|u|^2)(\cdot,t) \psi\\
& + \frac{18}{\Theta_0}\|{c_{0}}\|_{L^{\infty}}\int_{(0,t)\times\Omega} |\nabla u|^2 \psi+\frac{2}{3\Theta_0}  \int_{(0,t)\times\Omega} (\sqrt{c})^{-2} |\nabla \sqrt{c}|^4 \psi
\\\leq~&\int_{(0,t)\times\Omega} n \ln n (\partial_t \psi + \Delta \psi) + \int_{(0,t)\times\Omega} n \ln n u \cdot \nabla \psi \\&+\int_{(0,t)\times\Omega}n\chi(c)\nabla c\cdot \nabla \psi +\int_{(0,t)\times \Omega}n\ln n \chi(c)\nabla c\cdot\nabla\psi\\&+ \frac{2}{\Theta_0}\int_{(0,t)\times\Omega} |\nabla \sqrt{c}|^2 (\partial_t \psi + \Delta \psi) + \frac{2}{\Theta_0} \int_{(0,t)\times\Omega} |\nabla \sqrt{c}|^2 u \cdot \nabla \psi\\&
+\frac{18}{\Theta_0}||c_{0}||_{L^{\infty}}\int_{(0,t)\times\Omega} |u|^2 \left(\partial_t \psi +\Delta \psi\right) + \frac{18}{\Theta_0}||c_{0}||_{L^{\infty}}\int_{(0,t)\times\Omega} |u|^2 u\cdot\nabla\psi
\\&+\frac{36}{\Theta_0}||c_{0}||_{L^{\infty}}\int_{(0,t)\times\Omega} (P - \bar{P}) u \cdot \nabla \psi - \frac{36}{\Theta_0}||c_{0}||_{L^{\infty}}\int_{(0,t)\times\Omega} n\nabla\phi \cdot u \psi,
\end{aligned}
\end{equation}
	where  $\psi\geq 0$ and vanishes in the parabolic boundary of $(0,t)\times\Omega.$
\end{definition}

\begin{remark}
We remark that the above local energy inequality is not unique, since one can obtain more many inequalities by relaxing some constants of coupling estimates. The first term on the left hand side may be not positive, which is different from the local energy inequality of Navier-Stokes equations. Hence, (\ref{local energy inequality}) is a local energy inequality of weak form.
\end{remark}

For $Q_{r}(z_0)=B_{r}(x_0)\times (t_0-r^2,t_0)$, where $z_0=(x_0,t_0)$, we say the solution $(n,c,u)$ of (\ref{eq:GKS}) is regular at $z_0$ if there exists $r_1>0$ such that 
$(n,\nabla c,u)\in L^\infty(Q_{r_1}(z_0))$. Moreover, let
\beno
\|\chi\|_0:=\|\chi\|_{L^\infty(0, \|c_0\|_{L^{\infty}})}+\|\chi'\|_{L^\infty(0, \|c_0\|_{L^{\infty}})}+\|\chi''\|_{L^\infty(0, \|c_0\|_{L^{\infty}})},
\eeno
and
\beno
\|\kappa\|_0:=\|\kappa\|_{L^\infty(0, \|c_0\|_{L^{\infty}})}+\|\kappa'\|_{L^\infty(0, \|c_0\|_{L^{\infty}})}
+\|\kappa''\|_{L^\infty(0, \|c_0\|_{L^{\infty}})}.
\eeno
Our main results are stated  as follows.
\begin{theorem}\label{thm:3} Assume that $(n,c,u)$ is a suitable weak solution of (\ref{eq:GKS}) in $\mathbb{R}^3\times(0,T)$, $Q_{r}(z_0)\subset  \mathbb{R}^3\times(0,T)$ and $0<\delta_0\leq \frac{1}{10}$. Then $z_0=(x_0,t_0)$ is a regular point, if there exists a constant $\varepsilon_1$ depending on $\delta_0$
and $\Theta_0$	such that
	\ben\label{assume:3}\nonumber
	&&\limsup_{r \to 0} r^{-1-\delta_0} \int_{Q_{r}(z_0)} |\nabla \sqrt{n}|^2 + \limsup_{r \to 0} r^{-1} \left(\int_{Q_{r}(z_0)} |\nabla u|^2 + |\nabla^2 \sqrt{c}|^2\right) \\&\leq& \frac{\varepsilon^8_1}{625(1+\|\chi\|_{0})^{80}(1+\|\nabla\phi\|_{L^\infty}+\|c_0\|_{L^\infty})^{160}}.
	\een
\end{theorem}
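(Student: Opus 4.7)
The plan is to prove the theorem as a Caffarelli--Kohn--Nirenberg type $\varepsilon$-regularity result, by iterating a decay estimate for a family of scale-invariant quantities localized at $z_0$. I would work with
\[
A(r) = r^{-1}\sup_{t_0-r^2<t<t_0}\int_{B_r(x_0)}|u|^2,\qquad E(r) = r^{-1}\int_{Q_r(z_0)}|\nabla u|^2,
\]
\[
B(r) = r^{-2}\int_{Q_r(z_0)}|u|^3,\qquad D(r) = r^{-2}\int_{Q_r(z_0)}|P-\overline{P}_{B_r}|^{3/2},
\]
and analogues built from $\sqrt{c}$ (using $\nabla\sqrt{c}$ and $\nabla^2\sqrt{c}$) and from $\sqrt{n}$ (using $\nabla\sqrt{n}$). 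The hypothesis (\ref{assume:3}) directly provides smallness of $E(r)$, of the $\nabla^2\sqrt{c}$ quantity, and (with the extra factor $r^{-\delta_0}$) a super-critical smallness for the $\nabla\sqrt{n}$ quantity.

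First I would use parabolic Sobolev--Poincar\'e inequalities together with the a priori membership recorded in Definition \ref{sws} to dominate the cubic quantities $B(r)$, $\int_{Q_r}|\nabla\sqrt{c}|^3$, $\int_{Q_r}n^{3/2}$ and $\int_{Q_r}|\nabla c|^3$ by suitable combinations of the quadratic invariants. The pressure term $D(r)$ is handled through the decomposition coming from $-\Delta P=\partial_i\partial_j(u^i u^j)+\nabla\cdot(n\nabla\phi)$: a local/harmonic split, with a Calder\'on--Zygmund bound on the local part and a mean-value estimate on the harmonic part that gives favorable decay under rescaling; this is where the $\|\nabla\phi\|_{L^\infty}$ dependence first appears.

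Next I would insert a parabolic cutoff $\psi$ adapted to $Q_{\theta r}\subset Q_r$ into the local energy inequality (\ref{local energy inequality}) and derive a closed decay estimate of the form
\[
\Phi(\theta r)\;\le\; C_\star\theta^{\alpha}\Phi(r)\;+\;C_\star\theta^{-\beta}\bigl(\Phi(r)\bigr)^{1+\gamma},
\]
where $\Phi$ is a sum of the scale-invariant quantities and $\alpha,\beta,\gamma>0$. The delicate contributions to this inequality are the cross terms
\[
\int n\chi(c)\nabla c\cdot\nabla\psi,\qquad\int n\ln n\, u\cdot\nabla\psi,\qquad\int n\nabla\phi\cdot u\,\psi,\qquad\int (P-\overline{P})u\cdot\nabla\psi,
\]
each treated by H\"older and interpolation between the $\sqrt n$, $\sqrt c$, $u$ and $P$ estimates; tracking constants explicitly produces the high-degree polynomial dependence on $\|\chi\|_0$, $\|c_0\|_{L^\infty}$ and $\|\nabla\phi\|_{L^\infty}$ appearing on the right of (\ref{assume:3}). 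Choosing first $\theta$ small and then $\varepsilon_1$ small depending on $\theta$, the decay inequality iterates to $\Phi(r)\le Cr^{\alpha}$ for all sufficiently small $r$.

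Once this Morrey--Campanato decay is in hand, standard parabolic $L^p$ theory applied to the linearized heat equations for $n$ and $c$ and to the Stokes system for $u$ gives H\"older continuity of $(n,\nabla c,u)$ on a small neighbourhood of $z_0$, and a Moser/Schauder bootstrap promotes this to $(n,\nabla c,u)\in L^\infty(Q_{r_1}(z_0))$, i.e.\ regularity at $z_0$. The main obstacle is closing the decay inequality in the previous step: because the leading left-hand side term $\int n\ln n$ in (\ref{local energy inequality}) is not sign-definite (as the Remark emphasises), one cannot propagate $n\log n$ directly and must instead work with $\sqrt{n+1}$-type quantities, absorbing the sign defect using precisely the super-critical $r^{-\delta_0}$ built into the hypothesis. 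The triple coupling $u\leftrightarrow n\leftrightarrow c$, together with the pressure and the buoyancy forcing $n\nabla\phi$, forces one to treat all scale-invariant quantities simultaneously, and it is this large number of coupled H\"older/interpolation steps that produces the high-power polynomial in $\|\chi\|_0$ and in $(\|\nabla\phi\|_{L^\infty}+\|c_0\|_{L^\infty})$ on the right of (\ref{assume:3}).
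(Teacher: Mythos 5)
Your skeleton --- scale-invariant quantities, a cutoff in the local energy inequality, interpolation for the cubic terms, a Calder\'on--Zygmund/harmonic splitting of the pressure, and an iteration in the radius --- is indeed the shape of the paper's Section 2 argument, but there are two genuine gaps. First, your closing step is not a proof. With this hypothesis one cannot obtain the Campanato-type decay $\Phi(r)\le Cr^{\alpha}$ you rely on: the dissipation quantities supplied by (\ref{assume:3}) are only bounded by a fixed $\varepsilon$ at all scales, so the iteration yields $G(\theta_0^{k}\rho_0)\le 2^{-k}G(\rho_0)+C\varepsilon^{1/4}$, i.e.\ smallness at one sufficiently small scale, never decay to zero; the paper then rescales that scale to unit size and invokes the $\varepsilon$-regularity criterion of Theorem \ref{thm:1} (condition (\ref{eq:condition-32})), whose proof is a separate dyadic induction occupying Section 4. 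For this coupled system, with the $n\ln n$ terms, the chemotaxis coupling and the buoyancy forcing, that criterion is not ``standard parabolic $L^p$ theory plus a Moser/Schauder bootstrap''; your final paragraph asserts exactly the hard step without proving it or citing it.

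Second, you misidentify the role of $\delta_0$. The lack of a sign of $\int n\ln n\,\psi$ is handled cheaply (adding a multiple of $\int n^{1/2}$ or $\int n^{29/30}$); the real obstruction is that $N(r)=r^{-2}\|n\ln n\|_{L^{3/2}(Q_r)}^{3/2}$ is not scale invariant. Bounding $n^{1/2}(1+\ln n)\le Cn^{1/2+\gamma}$ costs a factor $\rho^{-3\gamma}$ in the decay estimate, and it is precisely the super-critical smallness $E_{\sqrt n}(\rho)\le\varepsilon\rho^{\delta_0}$ from (\ref{assume:3}), combined with the choice $\gamma<\delta_0/(6-3\delta_0)$, that absorbs this loss; in addition, after zooming to the small scale $\rho_0$ the logarithm produces $|\ln\rho_0|$ factors that are only controlled because $\rho_0\le\varepsilon_3^{2}$ (Proposition \ref{lem:1torho0}). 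Your proposed inequality $\Phi(\theta r)\le C\theta^{\alpha}\Phi(r)+C\theta^{-\beta}\Phi(r)^{1+\gamma}$ has no slot for these scale-dependent losses, and ``working with $\sqrt{n+1}$-type quantities'' does not address them, so the decay step you need does not close as sketched.
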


Consequently,  the Hausdorff measure for the set of singularity points follows naturally.
\begin{corollary}\label{thm:4}
	Under the assumptions of Theorem \ref{thm:3}, we have
	\beno
	\mathcal{P}^{1+\delta_0}(\mathcal{S}) = 0,
	\eeno
	where $\mathcal{S}$ is the singular set  of (\ref{eq:GKS}) and $\mathcal{P}^\alpha(\mathcal{S})$ is the Hausdorff measure of  parabolic version.
\end{corollary}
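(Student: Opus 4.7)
The plan is to derive the corollary from Theorem \ref{thm:3} by a standard Vitali-type covering argument, suitably adapted to the mixed scaling in \eqref{assume:3}. Denote by $\varepsilon_0$ the threshold appearing on the right-hand side of \eqref{assume:3}. The contrapositive of Theorem \ref{thm:3} forces the sum of the two limsups to exceed $\varepsilon_0$ at every singular point, so I would split $\mathcal{S}=\mathcal{S}_1\cup\mathcal{S}_2$ according to which of the two limsups is at least $\varepsilon_0/2$, and then handle each piece at its own natural scale.

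Before covering, I would verify that $\mathcal{S}$ has zero five-dimensional parabolic Lebesgue measure. Since $|\nabla\sqrt{n}|^2$, $|\nabla u|^2$, and $|\nabla^2\sqrt{c}|^2$ all lie in $L^1(Q_T)$ by the global energy inequality in Definition \ref{sws}(iii), Lebesgue differentiation gives $\int_{Q_r(z)} f = O(r^5)$ at a.e.~$z$, which is $o(r^{1+\delta_0})$ since $1+\delta_0\leq 11/10<5$. Such $z$ cannot be singular, so $|\mathcal{S}|=0$, and for every $\eta>0$ there is an open neighborhood $\mathcal{O}_\eta\supset\mathcal{S}$ with $|\mathcal{O}_\eta|<\eta$.

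The main step is then the covering. For each $z\in\mathcal{S}_j$ ($j=1,2$), I would pick $r_z<\eta$ so small that $Q_{r_z}(z)\subset\mathcal{O}_\eta$ and the relevant integral lower bound holds at radius $r_z$, and then extract by Vitali a disjoint subfamily $\{Q_{r_i}(z_i)\}$ with $\{Q_{5r_i}(z_i)\}$ still covering $\mathcal{S}_j$. On $\mathcal{S}_1$ the bound $\int_{Q_{r_i}(z_i)}|\nabla\sqrt{n}|^2>(\varepsilon_0/2)\,r_i^{1+\delta_0}$ immediately yields
\begin{equation*}
\sum_i r_i^{1+\delta_0}\leq \frac{2}{\varepsilon_0}\int_{\mathcal{O}_\eta}|\nabla\sqrt{n}|^2.
\end{equation*}
On $\mathcal{S}_2$ the analogous bound only produces $\sum_i r_i\leq (2/\varepsilon_0)\int_{\mathcal{O}_\eta}(|\nabla u|^2+|\nabla^2\sqrt{c}|^2)$; using $r_i<\eta$ this upgrades to
\begin{equation*}
\sum_i r_i^{1+\delta_0}\leq \frac{2\eta^{\delta_0}}{\varepsilon_0}\int_{\mathcal{O}_\eta}(|\nabla u|^2+|\nabla^2\sqrt{c}|^2).
\end{equation*}
Sending $\eta\to 0$, both right-hand sides vanish by absolute continuity of the $L^1$ integrals against $|\mathcal{O}_\eta|\to 0$, so $\mathcal{P}^{1+\delta_0}(\mathcal{S}_j)=0$ for $j=1,2$.

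The only genuine subtlety is reconciling the two different scalings in \eqref{assume:3}: a naive Vitali estimate on the $r^{-1}$ term only produces a finite $\mathcal{P}^1$ bound on $\mathcal{S}_2$, not a vanishing $\mathcal{P}^{1+\delta_0}$ bound. The slack factor $\eta^{\delta_0}\to 0$ coming from the freedom to shrink the covering radii is precisely what closes this gap; everything else follows the Caffarelli--Kohn--Nirenberg template.
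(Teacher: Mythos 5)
Your proposal is correct and is essentially the paper's own argument: the contrapositive of Theorem \ref{thm:3}, a Vitali covering by small parabolic cylinders contained in a neighborhood of $\mathcal{S}$ of small Lebesgue measure, and absolute continuity of the dissipation integrals; the paper does not split $\mathcal{S}$ into $\mathcal{S}_1\cup\mathcal{S}_2$, since for $r\le 1$ the $r^{-1}$-scaled term already yields $\int_{Q_r}(|\nabla u|^2+|\nabla^2\sqrt c|^2)\ge \tfrac{\varepsilon}{2}\,r^{1+\delta_0}$, which plays exactly the role of your $\eta^{\delta_0}$ slack (and indeed even a finite $\mathcal{P}^1$ premeasure bound vanishing with $|\mathcal{O}_\eta|$ would suffice, as $\mathcal{P}^{1}$-finiteness forces $\mathcal{P}^{1+\delta_0}=0$). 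One minor correction: the global energy inequality in Definition \ref{sws}(iii) controls $|\nabla\sqrt{n+1}|^2$ rather than $|\nabla\sqrt{n}|^2$, so for that term you should instead invoke the local integrability $\nabla\sqrt{n}\in L^2_{\rm loc}$ from Definition \ref{sws}(i) and run the covering on compact pieces $\mathcal{S}\cap R$, exactly as the paper does with $\mathcal{S}_R$.
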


\begin{remark} The singular set  of (\ref{eq:GKS}) has $1^+$-dimensional Hausdorff measure, which seems to be  consistent with the observation in the experiment of \cite{DCCGK2004}, since the  ``turbulence" happened in the air-water-plastic contact line. When $n,c$ vanishes, the above theorem is also similar as the Navier-Stokes case (see \cite{CKN}). It's interesting that whether the dimensional exponent $1^+$ can be improved to $1$,  and it is difficult to improve it to $1^-$, since it is greatly open even for the Navier-Stokes equations.
\end{remark}
\begin{remark}
The system (\ref{eq:GKS}) has the following scaling property as the  Navier-Stokes equations: if $(n,c,u,p)$ is a solution, then for any $\rho_0>0$,
\ben\label{eq:scaling'}
&&n_{\rho_0}(x,t)=\rho_0^2 n(\rho_0 x, \rho_0^2t);~~ c_{\rho_0}(x,t)=c(\rho_0 x, \rho_0^2t),\nonumber\\
&&~~ u_{\rho_0}(x,t)=\rho_0 u(\rho_0 x, \rho_0^2t);~~p_{\rho_0}(x,t)=\rho_0^2 p(\rho_0 x, \rho_0^2t),
\een
$(n_{\rho_0},c_{\rho_0},u_{\rho_0},p_{\rho_0})$ is also a solution. The main difficulty lies in the first term of the local energy inequality (\ref{local energy inequality}), which is not scaling invariant.
\end{remark}

The above results are based on the following regularity criteria.
\begin{theorem}\label{thm:1}
Assume that $(n,c,u)$ is a suitable weak solution of (\ref{eq:GKS}) in $\mathbb{R}^3\times(-1,0)$, then $z_0=(x_0,0)$ is a regular point, if there exists a constant $\varepsilon_1$, which depends on $ \Theta_0$
such that one of the following conditions holds \\
\par
$(i)$
\ben\label{eq:condition}
&&\sup_{-1<t<0}\int_{B_1(x_0)} \left(n + |n \ln n| + |\nabla \sqrt{c}|^2 + |u|^2\right)dx \\
&&+ \int_{Q_1(z_0)} \left(|\nabla \sqrt{n}|^2 + |\nabla u|^2 + |\nabla^2 \sqrt{c}|^2 + |P|^\frac32\right)dxdt \nonumber\\
&&\leq \varepsilon_0:= \frac{\varepsilon_1}{(1+\|\chi\|_{0})^{12}(1+\|c_0\|_{L^\infty}+\|\nabla \phi\|_{L^\infty})^{24}}\nonumber;
\een	
\par$(ii)$\ben\label{eq:condition-32}
  &&\int_{Q_{1}(z_0)} \left(n^{\frac32}(|\ln n|+1)^\frac32 + |\nabla \sqrt{c}|^3 + |u|^3+|P|^\frac32\right)dxdt \nonumber\\
  &&\leq \frac{\varepsilon_1^{2}}{(1+\|\chi\|_{0})^{20}\left( 1+\|\nabla\phi\|_{L^{\infty}}+\|c_{0}\|_{L^{\infty}} \right)^{40}}.
\een
\end{theorem}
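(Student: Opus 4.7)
The plan is to establish both criteria within the Caffarelli--Kohn--Nirenberg $\varepsilon$-regularity framework, adapted to accommodate the chemotactic coupling $n\chi(c)\nabla c$, the buoyancy forcing $-n\nabla\phi$, and the $c$-equation. I would prove (ii) first and then derive (i) from (ii), using parabolic interpolation to upgrade the energy-type hypothesis of (i) into the $L^3$-type hypothesis of (ii) on a slightly smaller cylinder.

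For (ii), the first step is to test the local energy inequality (\ref{local energy inequality}) with a cutoff $\psi=\phi^2\theta^2$, where $\phi(x)$ is supported in $B_1(x_0)$ and equals one on $B_{1/2}(x_0)$, and $\theta(t)$ is the corresponding temporal cutoff. Each of the roughly ten terms on the right side of (\ref{local energy inequality}) is a product of two or three quantities of combined scaling dimension three, and I would estimate each by a H\"older inequality of type $L^3\!\times\!L^{3/2}$ or $L^3\!\times\!L^3\!\times\!L^3$ using the $L^3$-smallness hypothesis. For example,
\[
\left|\int n\chi(c)\nabla c\cdot\nabla\psi\right|\le \|\chi\|_0\,\|c_0\|_{L^\infty}^{1/2}\,\|n\|_{L^{3/2}}\,\|\nabla\sqrt c\|_{L^3}\,\|\nabla\psi\|_{L^\infty},
\]
with analogous bounds on $\int(P-\bar P)u\cdot\nabla\psi$ and $\int|u|^2u\cdot\nabla\psi$. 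The factors of $\|\chi\|_0$, $\|c_0\|_{L^\infty}$, and $\|\nabla\phi\|_{L^\infty}$ accumulating through these Hölder estimates give precisely the exponent pattern in the threshold (\ref{eq:condition-32}). Iterating on a sequence of dyadic cylinders $Q_{2^{-k}}(z_0)$, the upgraded scale-invariant quantities decay in $k$, yielding a Morrey--Campanato type estimate. I would then conclude regularity by a Serrin-type argument: once $u\in L^\infty_{\mathrm{loc}}$, the $n$-equation becomes a linear parabolic equation with smooth drift, and Moser iteration applied to $n$ and $\nabla c$ closes the argument.

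For (i)$\Rightarrow$(ii), I would verify that the hypothesis of (i), after a standard rescaling, implies the hypothesis of (ii) on a subcylinder. By the parabolic Gagliardo--Nirenberg inequality
\[
\|f\|_{L^{10/3}(Q_r)}^{10/3}\leq C\bigl(\sup_t\|f\|_{L^2_x}\bigr)^{4/3}\|\nabla f\|_{L^2(Q_r)}^2,
\]
applied to $f=\sqrt n,\sqrt c,u$, the bounds in (i) give $n,|\nabla\sqrt c|^2,|u|^2\in L^{5/3}\subset L^{3/2}$ with smallness. The Calder\'on--Zygmund estimate for $-\Delta P=\partial_i\partial_j(u_iu_j)+\nabla\cdot(n\nabla\phi)$ then controls $\|P\|_{L^{3/2}}$ by $\|u\|_{L^3}^2+\|\nabla\phi\|_{L^\infty}\|n\|_{L^{3/2}}$. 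The logarithmic factor in $n^{3/2}(|\ln n|+1)^{3/2}$ is handled by the pointwise bound $|\ln n|+1\le C(n^{1/4}+n^{-1/4})$ combined with the $L^\infty_tL^1_x$ mass control on $n$.

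The hard part will be twofold. First, as noted in the remark after Definition \ref{sws}, the quantity $n\ln n$ on the left side of (\ref{local energy inequality}) is not sign-definite; unlike $|u|^2$ in the Navier--Stokes case, $n\ln n$ can be negative where $n\in(0,1)$. I would handle this by decomposing $n\ln n=(n\ln n)_+-(n\ln n)_-$ and using $0\le(n\ln n)_-\le e^{-1}$ pointwise to absorb the negative part into a universal constant, preserving the monotonicity structure needed for the iteration. The second obstacle is the triple-product coupling $n\ln n\,\chi(c)\nabla c\cdot\nabla\psi$, which mixes factors of unequal scaling; the $L^\infty$ bound on $c$ (from the maximum principle for the $c$-equation) together with a four-factor H\"older estimate keeps it under control, but at the cost of the high exponents on $\|c_0\|_{L^\infty}$ and $\|\chi\|_0$ that appear in the threshold $\varepsilon_0$.
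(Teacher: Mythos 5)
Your architecture inverts the paper's, and the inversion is where the proof breaks. The paper proves criterion \eqref{eq:condition} directly by a dyadic induction (Proposition \ref{lem:induction}, using the heat-kernel-weighted test function of Lemma \ref{heat kernel} and a scale-by-scale pressure decomposition into a Calder\'on--Zygmund/Riesz part and a harmonic part), and then deduces criterion \eqref{eq:condition-32} by a single application of the local energy inequality \eqref{local energy inequality}, i.e. (ii) $\Rightarrow$ (i). You propose the reverse reduction (i) $\Rightarrow$ (ii), and with the thresholds actually stated in the theorem this step cannot work. Write $K=1+\|\nabla\phi\|_{L^\infty}+\|c_0\|_{L^\infty}$: the threshold in \eqref{eq:condition-32} is $\varepsilon_1^2(1+\|\chi\|_0)^{-20}K^{-40}$, strictly more stringent than $\varepsilon_0=\varepsilon_1(1+\|\chi\|_0)^{-12}K^{-24}$ in \eqref{eq:condition}, while $\varepsilon_1$ may depend only on $\Theta_0$. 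Parabolic interpolation does give $\int_{Q}|u|^3+n^{3/2}+|\nabla\sqrt c|^3\lesssim\varepsilon_0^{3/2}=\varepsilon_1^{3/2}(1+\|\chi\|_0)^{-18}K^{-36}$, but this is below $\varepsilon_1^2(1+\|\chi\|_0)^{-20}K^{-40}$ only if $(1+\|\chi\|_0)^2K^4\le\varepsilon_1^{1/2}$, which is impossible since the left side is at least $1$; and the pressure term transfers with no gain at all, because (i) only provides $\int_{Q_1}|P|^{3/2}\le\varepsilon_0$ and passing to a subcylinder of radius $r$ makes the scale-invariant quantity $r^{-2}\int_{Q_r}|P|^{3/2}$ worse, not better, absent a separate decomposition-and-iteration argument. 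This is why the paper runs the reduction the other way: one application of \eqref{local energy inequality} converts the hypothesis of (ii) into that of (i) with a loss of $(1+\|\chi\|_0)K^2$ that is absorbed by the gain $\varepsilon_2^{2/3}$, consistent with the exponent pattern $12,24$ versus $20,40$.

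Two further points. First, the pointwise bound $|\ln n|+1\le C(n^{1/4}+n^{-1/4})$ is too crude: it produces $n^{3/2}(|\ln n|+1)^{3/2}\lesssim n^{15/8}+n^{9/8}$, and $n^{15/8}$ is not controlled by the $L^{5/3}_{t,x}$ integrability of $n$ that (i) yields; you need $|\ln n|^{3/2}\lesssim n^{1/6}$ for large $n$ (any power up to $1/9$ works), which is exactly the choice the paper makes. Second, the part of your plan that carries the real weight --- the direct $\varepsilon$-regularity proof under \eqref{eq:condition-32} --- is only sketched: ``iterate on dyadic cylinders and obtain Morrey decay'' presupposes precisely the hard content of Proposition \ref{lem:induction}, namely the treatment of the non-sign-definite and non-scale-invariant $n\ln n$ terms at every scale, the localized pressure decomposition $P=P_1+P_2$ with $P_1$ estimated through Calder\'on--Zygmund and Riesz potential bounds on $u\otimes u$ and $n\nabla\phi$ cut by a dyadic partition of unity and $P_2$ harmonic, and the passage from smallness of the scale-invariant quantities to $L^\infty$ bounds on $(n,\nabla c,u)$, which your appeal to a Serrin-type argument with ``$u\in L^\infty_{\rm loc}$'' assumes rather than proves. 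The sign issue for $n\ln n$ that you do address (absorbing the negative part via $(n\ln n)_-\le e^{-1}$) is the same device the paper uses, but it is the easier half of that difficulty.
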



The paper is organized as follows.  The proof of Theorem \ref{thm:3} is given in Section 2 under the assumption of Theorem \ref{thm:1}. Hausdorff measure of the set of singularities is stated in Section 3. Section 4 aims to prove Theorem \ref{thm:1} by method of mathematical induction. Besides, some fundamental lemmas are presented in the appendix.

Throughout this article, $C(A,B)$ denotes an absolute constant of depending on $A,B$ but independent of $(n,c,u)$ and may be different from line to line.
We write $ L^{p}(\mathbb{R}^{3})=L^{p} $ and $\|f\|_{L^{p}}=\|f\|_{p}$ for simplicity.

\section{Proof of Theorem \ref{thm:3}}

In this section, we will prove Theorem \ref{thm:3} under the assumption of Theorem \ref{thm:1}.
For convenience, let us introduce some invariant quantities under the scaling (\ref{eq:scaling'}):
\ben\label{A,E,C,D}\nonumber
&&A_u(r,z_0)=r^{-1}\|u\|^2_{L^\infty_tL^2_x(Q_r(z_0))};\qquad\qquad
E_u(r,z_0)=r^{-1}\|\nabla u\|^2_{L^2_tL^2_x(Q_r(z_0))};\\\nonumber
&&A_{\nabla\sqrt{c}}(r,z_0)=r^{-1}\|\nabla \sqrt{c}\|^2_{L^\infty_tL^2_x(Q_r(z_0))};\quad
E_{\nabla\sqrt{c}}(r,z_0)=r^{-1}\|\nabla^2 \sqrt{c}\|^2_{L^2_tL^2_x(Q_r(z_0))};\\\nonumber
&&A_{\sqrt{n}}(r,z_0)=r^{-1}\|\sqrt{n}\|^2_{L^\infty_tL^2_x(Q_r(z_0))};\qquad
E_{\sqrt{n}}(r,z_0)=r^{-1}\|\nabla \sqrt{n}\|^2_{L^2_tL^2_x(Q_r(z_0))};\\\nonumber
&&C_u(r,z_0)=r^{-2}\|u\|^3_{L^3_tL^3_x(Q_r(z_0))};\quad\quad\quad\quad
\tilde{C}_u(r,z_0)=r^{-2}\|u-(u)_r\|^3_{L^3_tL^3_x(Q_r(z_0))};\\\nonumber
&&C_{\sqrt{n}}(r,z_0)=r^{-2}\|{\sqrt{n}}\|^3_{L^3_tL^3_x(Q_r(z_0))};\quad\quad
C_{\nabla\sqrt{c}}(r,z_0)=r^{-2}\|{\nabla\sqrt{c}}\|^3_{L^3_tL^3_x(Q_r(z_0))};\\\nonumber
&&D(r,z_0)=r^{-2}\|P\|^{\frac32}_{L^{\frac32}_tL^{\frac32}_x(Q_r(z_0))}; \quad\quad\quad\quad
M(r,z_0) =r^{-1}\|n \ln n\|_{L^\infty_tL^1_x(Q_r(z_0))}; \\\nonumber
&&N(r,z_0) = r^{-2}\|n \ln n\|^{\frac32}_{L^{\frac32}_tL^{\frac32}_x(Q_r(z_0))}.
\een
For simplicity, we denote $Q_r(0)$ by $Q_r$, and  we will use the following notations:
$A_u(r,0)=A_u(r)$, $E_u(r,0)=E_u(r)$, etc. Moreover, let
\ben\label{A.E}\nonumber
A_{u,{\nabla\sqrt{c}},{\sqrt{n}}}(r)=A_u(r)+A_{\nabla\sqrt{c}}(r)+A_{\sqrt{n}}(r);\\ \nonumber
E_{u,{\nabla\sqrt{c}},{\sqrt{n}}}(r)=E_u(r)+E_{\nabla\sqrt{c}}(r)+E_{\sqrt{n}}(r);\\\nonumber
C_{u,\nabla\sqrt c,\sqrt n}(r)=C_u(r)+C_{\nabla\sqrt{c}}(r)+C_{\sqrt{n}}(r).
\een

Before proving Theorem \ref{thm:3}, we first prove the following proposition.
\begin{proposition}\label{lem:1torho0}
Under the assumptions of Theorem \ref{thm:3},  let $\rho_0 \in (0,1)$. If there exists a constant $ \varepsilon_3\leq  \frac{\varepsilon^2_1}{5(1+\|\chi\|_{0})^{20}(1+\|c_0\|_{L^\infty}+\|\nabla \phi\|_{L^\infty})^{40}}$ such that 
\ben\label{eq:condition-32'}
N(\rho_0,z_0)  + C_{\sqrt{n}}(\rho_0,z_0) + C_{\nabla\sqrt{c}}(\rho_0,z_0) + C_u(\rho_0,z_0)+D(\rho_0,z_0)\leq \varepsilon_3,
\een
for some $ \rho_0\leq(\varepsilon_3)^{2} $, 
then $z_0$ is a regular point.
\end{proposition}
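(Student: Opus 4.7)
The plan is to reduce the statement to Theorem~\ref{thm:1}(ii) via the rescaling~\eqref{eq:scaling'}. Centering at $z_0$, define $(n_{\rho_0},c_{\rho_0},u_{\rho_0},P_{\rho_0})$ according to~\eqref{eq:scaling'}; since~\eqref{eq:GKS} is invariant under~\eqref{eq:scaling'}, the rescaled triple is again a suitable weak solution on the unit parabolic cylinder. It therefore suffices to verify the smallness hypothesis~\eqref{eq:condition-32} at the origin for the rescaled solution, because Theorem~\ref{thm:1}(ii) will then give that $(0,0)$ is regular for $(n_{\rho_0},c_{\rho_0},u_{\rho_0})$, and regularity pulls back to $z_0$ for $(n,c,u)$ under the scaling.

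Four of the five terms in~\eqref{eq:condition-32'} are scale invariant. A direct change of variables $y=\rho_0 x$, $s=\rho_0^2 t$ identifies
\[
\int_{Q_1}|u_{\rho_0}|^3 = C_u(\rho_0,z_0),\quad \int_{Q_1}|\nabla\sqrt{c_{\rho_0}}|^3 = C_{\nabla\sqrt{c}}(\rho_0,z_0),\quad \int_{Q_1}|P_{\rho_0}|^{3/2}=D(\rho_0,z_0),
\]
and similarly $\int_{Q_1} n_{\rho_0}^{3/2}=C_{\sqrt{n}}(\rho_0,z_0)$. Each of these is bounded by $\varepsilon_3$ by hypothesis.

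The only non-invariant term is the entropy $\int_{Q_1}n_{\rho_0}^{3/2}(|\ln n_{\rho_0}|+1)^{3/2}$. Using $n_{\rho_0}=\rho_0^2 n$, so $\ln n_{\rho_0}=2\ln\rho_0+\ln n$, together with the elementary inequality $(|a|+|b|+1)^{3/2}\le C(|a|^{3/2}+|b|^{3/2}+1)$, the same change of variables produces
\[
\int_{Q_1} n_{\rho_0}^{3/2}(|\ln n_{\rho_0}|+1)^{3/2}\,dxdt \le C\bigl[N(\rho_0,z_0)+(1+|\ln\rho_0|^{3/2})C_{\sqrt{n}}(\rho_0,z_0)\bigr]\le C(1+|\ln\rho_0|^{3/2})\varepsilon_3.
\]
The extra logarithmic factor is the principal technical obstacle, and it is exactly the reason for the coupling $\rho_0\le \varepsilon_3^2$. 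Combined with the tight numerical bound $\varepsilon_3 \le \varepsilon_1^2/[5(1+\|\chi\|_0)^{20}(1+\|c_0\|_{L^\infty}+\|\nabla\phi\|_{L^\infty})^{40}]$ and the quick decay of $\varepsilon_3|\ln\varepsilon_3|^{3/2}$ as $\varepsilon_3\to 0$, one checks that this entropy contribution is absorbed into, say, $\varepsilon_0/5$, where $\varepsilon_0$ denotes the threshold in~\eqref{eq:condition-32}.

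Summing the five contributions shows that~\eqref{eq:condition-32} is satisfied at the origin for the rescaled solution. Theorem~\ref{thm:1}(ii) then gives that $(0,0)$ is a regular point for $(n_{\rho_0},c_{\rho_0},u_{\rho_0})$, and undoing the scaling shows that $z_0$ is a regular point for $(n,c,u)$. The whole difficulty of the proof is concentrated in extracting clean control of the logarithmic loss in the rescaling of the entropy term; all other estimates are routine changes of variables.
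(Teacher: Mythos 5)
Your reduction via the scaling \eqref{eq:scaling'}, and the identification of the four scale-invariant integrals with $C_{\sqrt n}(\rho_0,z_0)$, $C_{\nabla\sqrt c}(\rho_0,z_0)$, $C_u(\rho_0,z_0)$, $D(\rho_0,z_0)$, coincides with the paper's first step. The gap is in the entropy term, which is where the whole content of the proposition lies. Your bound keeps the factor $(1+|\ln\rho_0|^{3/2})$ against the full $C_{\sqrt n}(\rho_0,z_0)\le\varepsilon_3$, and you then assert that this is ``absorbed'' using $\rho_0\le\varepsilon_3^2$ and the decay of $\varepsilon_3|\ln\varepsilon_3|^{3/2}$. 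This does not close: the hypothesis $\rho_0\le\varepsilon_3^2$ bounds $|\ln\rho_0|$ from \emph{below} (by $2|\ln\varepsilon_3|$), not from above, so $(1+|\ln\rho_0|^{3/2})\varepsilon_3$ can be arbitrarily large when $\rho_0$ is much smaller than $\varepsilon_3^2$; and even in the borderline case $\rho_0=\varepsilon_3^2$ your estimate is $C\varepsilon_3|\ln\varepsilon_3|^{3/2}$, while the margin available in \eqref{eq:condition-32} is only of size $\varepsilon_3$ itself, since $\varepsilon_3$ is allowed to be as large as one fifth of the threshold $\varepsilon_1^2(1+\|\chi\|_0)^{-20}(1+\|\nabla\phi\|_{L^\infty}+\|c_0\|_{L^\infty})^{-40}$. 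Requiring $C\varepsilon_3|\ln\varepsilon_3|^{3/2}\le\varepsilon_3$ would force $C|\ln\varepsilon_3|^{3/2}\le 1$, which fails; the absolute smallness of $\varepsilon_3|\ln\varepsilon_3|^{3/2}$ is irrelevant, what is needed is smallness \emph{relative to} $\varepsilon_3$, i.e.\ a log factor accompanied by an extra positive power of $\varepsilon_3$ (or of $\rho_0$), which your bound does not produce.

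The paper handles this by decomposing $Q_{\rho_0}$ according to the size of $n$: on $\{\rho_0^{-3/2}\le n\le\rho_0^{-2}\}$ and on $\{n>\rho_0^{-2}\}$ one has $|\ln(\rho_0^2 n)|\le \ln n$, so these regions contribute only $N(\rho_0,z_0)\le\varepsilon_3$ with no logarithm at all; the logarithmic loss is confined to the low region $\{n<\rho_0^{-3/2}\}$, where it appears paired with a higher power of $\varepsilon_3$ and is then absorbed through the explicit requirement $C|\ln\varepsilon_3|^{3/2}\varepsilon_3^{1/2}\le 1$ built into the choice of $\varepsilon_1$ (this is where the coupling $\rho_0\le\varepsilon_3^2$ actually enters the argument). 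Your global triangle inequality $|\ln n_{\rho_0}|\le|\ln n|+2|\ln\rho_0|$ discards precisely this structure: it attaches $|\ln\rho_0|^{3/2}$ to a single power of $\varepsilon_3$ over the whole cylinder, leaving no mechanism to absorb it, so the verification of \eqref{eq:condition-32} for the rescaled solution is not established and the appeal to Theorem \ref{thm:1}(ii) is unjustified. (A minor point done in the paper and worth stating: $\|(c_0)_{\rho_0}\|_{L^\infty}\le\|c_0\|_{L^\infty}$ and $\|\nabla\phi_{\rho_0}\|_{L^\infty}\le\|\nabla\phi\|_{L^\infty}$, so the threshold in \eqref{eq:condition-32} for the rescaled data is no smaller than the original one.)
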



\begin{proof} Without loss of generality, let $z_0 = (0,0)$, then (\ref{eq:scaling'}) and $\eqref{eq:condition-32'}$ imply that

\ben\label{4,1}
\int_{Q_1} |n_{\rho_0}|^\frac32 + |\nabla \sqrt{c_{\rho_0}}|^3 + |u_{\rho_0}|^3+|P_{\rho_0}|^\frac32 \leq \varepsilon_3.
\een
The remaining part is to estimate the term of $ \int_{Q_{1}} |n_r\ln n_r|$. Note that
\beno
&&\int_{Q_{1}} |n_{\rho_0}\ln n_{\rho_0}|^{\frac32}\\ \nonumber
&\leq &  {\rho_0}^{-2}\int_{Q_{\rho_0}} |n\ln ({\rho_0}^2n)|^{\frac32}\\ \nonumber
&\leq &  {\rho_0}^{-2}\int_{Q_{\rho_0}\cap\{n<{\rho_0}^{-\frac32}\}} |n\ln ({\rho_0}^2n)|^{\frac32}+ {\rho_0}^{-2}\int_{Q_{\rho_0}\cap\{{\rho_0}^{-\frac32}\leq n\leq {\rho_0}^{-2}\}} |n\ln ({\rho_0}^2n)|^{\frac32}\\
&&+ {\rho_0}^{-2}\int_{Q_{\rho_0}\cap\{n>{\rho_0}^{-2}\}} |n\ln ({\rho_0}^2n)|^{\frac32}:= M_1'+M_2'+M_3'.
\eeno
For $ M_{1}', $ note that $ C(\ln \varepsilon_3)^{\frac32}\varepsilon_3^{\frac12}\leq 1$ for a suitable $\varepsilon_1$, then by (\ref{eq:condition-32'}) we have 
\ben\label{4,2} \nonumber
M_1' &\leq&  {\rho_0}^{-2}\int_{Q_{\rho_0}\cap\{n<{\rho_0}^{-\frac32}\}}( |n\ln n|+2n|\ln {\rho_0}|)^{\frac32}dx \\
&\leq& \varepsilon_3+2^{\frac32}|\ln \rho_0|^{\frac32}\varepsilon^{\frac32}_3\leq 2\varepsilon_3.
\een
For $ M_{2}'$ and $ M_{3}',$ direct calculations indicate that
\ben\label{4,3} \nonumber
M_2' &\leq&  {\rho_0}^{-2}\int_{Q_{\rho_0}\cap\{{\rho_0}^{-\frac32}\leq n\leq {\rho_0}^{-2}\}} |n\ln ({\rho_0}^2n)|^{\frac32}\\ \nonumber
&\leq &  {\rho_0}^{-2}\int_{Q_{\rho_0}\cap\{{\rho_0}^{-\frac32}\leq n\leq {\rho_0}^{-2}\}} |n\ln ({\rho_0}^{-\frac12})|^{\frac32}\\
&\leq &  {\rho_0}^{-2}\int_{Q_{\rho_0}} |n\ln n|^{\frac32}\leq \varepsilon_3,
\een
and
\ben\label{4,4} \nonumber
M_3' &\leq&  {\rho_0}^{-2}\int_{Q_{\rho_0}\cap\{n> {\rho_0}^{-2}\}} |n\ln ({\rho_0}^2n)|^{\frac32}\\
&\leq &  {\rho_0}^{-2}\int_{Q_{\rho_0}\cap\{n> {\rho_0}^{-2}\}} |n\ln n|^{\frac32} dx\leq \varepsilon_3.
\een
Combining $\eqref{4,1}$, $\eqref{4,2}$, $\eqref{4,3}$ and $\eqref{4,4}$, we have 
\beno
&&\int_{Q_1} |n_{\rho_0}|^\frac32 + |n_{\rho_0} \ln n_{\rho_0}|^\frac32 + |\nabla \sqrt{c_{\rho_0}}|^3 + |u_{\rho_0}|^3+|P_{\rho_0}|^{\frac32} \\&&\leq 5\varepsilon_3\leq \frac{\varepsilon_1^2}{(1+\|\chi\|_{0})^{20}(1+\|(c_0)_{\rho_0}\|_{L^\infty}+\|\nabla \phi_{\rho_0}\|_{L^\infty})^{40}},
\eeno
where we use $\|\nabla \phi_{\rho_0}\|_{L^\infty}\leq\|\nabla \phi\|_{L^\infty} $ and $\| (c_0)_{\rho_0}\|_{L^\infty}\leq\|c_0\|_{L^\infty} $.
By (\ref{eq:condition-32}), we know that $(n_{\rho_0}, \nabla\sqrt{c_{\rho_0}}, u_{\rho_0})$ are regular at $(0,0)$. The proof of Proposition \ref{lem:1torho0} is complete.
\end{proof}

In the following, we want to prove Theorem \ref{thm:3}. The proof is divided into five steps.

{\bf Step I: Local energy estimate.}
Set
\begin{eqnarray*}
\psi = \left\{
    \begin{array}{llll}
    \displaystyle 1, \quad (x,t) \in Q_r,\\
    \displaystyle 0, \quad (x,t) \in Q_{2r}^c,\\
    \end{array}
 \right.
\end{eqnarray*}
in local energy inequality, recall the inequality (\ref{local energy inequality}), there holds
\ben\label{2,1} \nonumber
&&\int_{B_r}(n \ln n)(\cdot,t) + 4 \int_{Q_r} |\nabla \sqrt{n}|^2+\frac{2}{\Theta_0}  \int_{B_r} (|\nabla \sqrt{c}|^2)(\cdot,t)\\ \nonumber
&&+\frac{4}{3\Theta_0}\int_{Q_r} |\Delta \sqrt{c}|^2 +\frac{18}{\Theta_0}\|{c_0}\|_{L^{\infty}}\int_{B_r}(|u|^2)(\cdot,t) + \frac{18}{\Theta_0}\|{c_{0}}\|_{L^{\infty}}\int_{Q_r} |\nabla u|^2\\ \nonumber
&\leq& C (1 + \|\chi\|_0 \|c_0\|_{L^\infty}^\frac12 + \|\nabla \phi\|_{L^\infty}) \left(\|n\|_{L^\frac32(Q_{2r})}^\frac32 + \|n \ln n\|_{L^\frac32(Q_{2r})} + \|n \ln n\|_{L^\frac32(Q_{2r})}^\frac32  \right)\\ \nonumber
&&+ C (1 + \|\chi\|_0 \|c_0\|_{L^\infty}^\frac12 + \|\nabla \phi\|_{L^\infty}) \left(\|\nabla \sqrt c\|_{L^3(Q_{2r})}^3+ \|\nabla \sqrt c\|_{L^3(Q_{2r})}^2 \right)\\\nonumber&&
+ C (1 +  \|c_0\|_{L^\infty} + \|\nabla \phi\|_{L^\infty})^2 \left( \|u\|_{L^3(Q_{2r})}^2 + \|u\|_{L^3(Q_{2r})}^3 + \|P\|_{L^\frac32(Q_{2r})}^\frac32\right)
.
\een
That is
\ben\label{5, 1}
&&r^{-1} \sup_t \int_{B_r} n \ln n + A_{\nabla\sqrt c,u}(r) + E_{\sqrt n,\nabla\sqrt c,u}(r) \\\nonumber&\leq & C(\Theta_0 )(1 + \|\chi\|_0)\left(1+\|c_0\|_{L^\infty}+\|\nabla \phi\|_{L^\infty}\right)^2 \left(C_{\sqrt n,\nabla\sqrt c,u}(2r) + D(2r) + 1 + N(2r)\right).
\een
Multiplying $(\ref{eq:GKS})_1$ with $\psi$ and integrating by parts, we arrive
\beno
\int_{B_r} (n \psi)(\cdot,t) = \int_{Q_{2r}} n  (\partial_t \psi + \Delta \psi)
+ \int_{Q_{2r}} n  u \cdot \nabla \psi+ \int_{Q_{2r}} n \chi(c) \nabla c \cdot \nabla \psi,
\eeno
which means 
\ben\label{5, 2}
A_{\sqrt n}(r) \leq (1 + \|\chi\|_0)\left(1+\|c_0\|_{L^\infty}+\|\nabla \phi\|_{L^\infty}\right)\left(C_{\sqrt n,\nabla\sqrt c,u}(2r) + 1\right).
\een
Noting that
\ben\label{n lnn}\nonumber
\sup_{t\in(-r^2,0)} \int_{B_r} |n \ln n| &\leq& \sup_{t\in(-4r^2,0)} \int_{B_{2r}} |n \ln n| \psi \\\nonumber
&\leq& \sup_{t\in(-4r^2,0)}\left(\int_{B_{2r}} n \ln n \psi - 2\int_{B_{2r} \cap \{x;n < 1\}} n \ln n \psi \right)   \\\nonumber
&\leq& \sup_{t\in(-4r^2,0)} \int_{B_{2r}} n \ln n \psi + 4e^{-1} \sup_{t\in(-4r^2,0)} \int_{B_{2r} \cap \{x;n < 1\}} n^\frac12 \psi \\
&\leq& \sup_{t\in(-4r^2,0)} \int_{B_{2r}} n \ln n \psi + 4e^{-1} |B_{1}|(2r)^{3},
\een
 by $\eqref{5, 1}$, $\eqref{5, 2}$ and (\ref{n lnn}), we arrive
\ben\label{4E}
M(r) + A_{\sqrt n,\nabla\sqrt c,u}(r) + E_{\sqrt n,\nabla\sqrt c,u}(r) \leq C \left(C_{\sqrt n,\nabla\sqrt c,u}(2r) + D(2r) + 1 + N(2r)\right).
\een

{\bf Step II: Estimate of the non-scale quantity $N(r)$.} Let $0 < 4r \leq \rho < 1$.
Consider the following estimate:
\beno
N(r) &=& r^{-2} \int_{Q_r} |n \ln n|^\frac32 dx dt \\\
&\leq& r^{-2} \int_{Q_r} |n \ln n - (n \ln n)_\rho|^\frac32 dx dt + r^{-2} \int_{Q_r} |(n \ln n)_\rho|^\frac32 dx dt \\
&:=& N_1(r) + N_2(r),
\eeno
where $(n \ln n)_\rho = |B_\rho|^{-1} \int_{B_\rho} n \ln n dx$. Set $ I_{\rho}=(-\rho^{2},0), $ for the term of $N_2(r)$, by the definition of $(n \ln n)_\rho$, there holds
\beno
N_2(r) \leq r\int_{I_\rho}\rho^{-\frac92}\left(\int_{B_\rho}n\ln ndx\right)^{\frac32}dt\leq  r\rho^{-3}\int_{Q_\rho}|n \ln n|^{\frac32}dxdt\leq  \left(\frac r\rho\right) N(\rho).
\eeno
For the term of $N_1(r)$, since $W^{1,1}(\mathbb{R}^3) \hookrightarrow L^\frac32(\mathbb{R}^3)$, we have
\ben\label{5,5}\nonumber
N_1(r) &\leq& C r^{-2} \int_{I_\rho} \left(\int_{B_\rho} |\nabla (n \ln n)| dx\right)^\frac32 dt\\\nonumber &\leq& C r^{-2} \int_{I_\rho} \left(\int_{B_\rho} \left|\nabla n^\frac12 \left[n^\frac12 (1 + \ln n)\right]\right| dx\right)^\frac32 dt \\
&\leq& C r^{-2} \left(\int_{Q_\rho} |\nabla \sqrt n|^2 dx dt \right)^\frac34 \left(\int_{I_\rho} \left(\int_{B_\rho} \left|n^\frac12 (1 + \ln n)\right|^2 dx\right)^3 dt \right)^\frac14.
\een
If $0< n \leq 1$,  we have $n^\frac12 |1 + \ln n|\leq Cn^\frac38$, which is bounded and 
\ben\label{5, 4}
\left(\int_{I_\rho} \left(\int_{B_\rho} \left|n^\frac12 (1 + \ln n)\right|^2 dx\right)^3 dt \right)^\frac14\leq C \rho^\frac{17}{16}\|n\|^{\frac{9}{16}}_{L^1}.
\een
If $n > 1$, there holds 
\beno
n^\frac12 (1 + \ln n) \leq C n^{\frac12 + \gamma} \quad {\rm for ~~ any} \quad \gamma > 0.
\eeno
Then we have
\ben\label{N 1}
N_1(r) \leq C r^{-2} \left(\int_{Q_\rho} |\nabla \sqrt n|^2 dx dt \right)^\frac34 \left(\int_{I_\rho} \left(\int_{B_\rho} \left|n^\frac12\right|^{2+4\gamma} dx\right)^3 dt \right)^\frac14.
\een
For some $\gamma$ which satisfy
\beno
\frac2{3(2+4\gamma)} + \frac3{2+4\gamma} \geq \frac32,
\eeno
we have 
\ben\label{5, 3}
\left(\int_{I_\rho} \left(\int_{B_\rho} \left|n^\frac12\right|^{2+4\gamma} dx\right)^3 dt \right)^\frac14 \leq C \rho^{\frac54-3\gamma} (A_{\sqrt n}(\rho) + E_{\sqrt n}(\rho))^\frac{3(1+2\gamma)}4.
\een
$\eqref{5, 4}$, $(\ref{N 1})$ and $\eqref{5, 3}$ imply that
\beno
N_1(r) &\leq& C \left(\frac \rho r\right)^2 \left[ \rho^{-3\gamma} E^{\frac34}_{\sqrt n}(\rho) \left(A_{\sqrt n}(\rho) + E_{\sqrt n}(\rho)\right)^\frac{3+6\gamma}4\right]\\&&+~ C\rho^{\frac38}\left(\frac \rho r\right)^2A^{\frac9{16}}_{\sqrt n}(\rho) E^{\frac34}_{\sqrt n}(\rho) .
\eeno
Collecting $N_1(r)$ and $N_2(r)$, for any $ 0 < \gamma \leq \frac19$, there holds
\ben\label{4N}\nonumber
N(r) &\leq& C  \left(\frac \rho r\right)^2 \left[ \rho^{-3\gamma} E^{\frac34}_{\sqrt n}(\rho) \left(A_{\sqrt n}(\rho) + E_{\sqrt n}(\rho)\right)^\frac{3+6\gamma}4\right] + \left(\frac r\rho\right) N(\rho)\\
&&+~ C\rho^{\frac38}\left(\frac \rho r\right)^2A^{\frac9{16}}_{\sqrt n}(\rho) E^{\frac34}_{\sqrt n}(\rho).
\een

{\bf Step III: Estimate of the nonlinear terms $C_{\sqrt n,\nabla \sqrt c,u}(r)$.}
Consider the following estimate:
\beno
C_u(r) = r^{-2} \int_{Q_r} |u|^3 dx dt \leq r^{-2} \int_{Q_r} |u - u_\rho|^3 dx dt + r^{-2} \int_{Q_r} |u_\rho|^3 dx dt.
\eeno
By the definition of $u_\rho$, there holds
\beno
r^{-2} \int_{Q_r} |u_\rho|^3 dx dt &\leq& C r \int_{I_\rho} \rho^{-9} \left(\int_{B_\rho} u dx \right)^3 dt \\&\leq& C r \rho^{-3} \int_{Q_\rho} |u|^3 dx dt = \left(\frac r \rho\right) C_u(\rho).
\eeno
By embedding inequality, there holds
\beno
\|u - u_\rho\|_{L^3(B_\rho)} \leq \|u - u_\rho\|_{L^2(B_\rho)}^\frac12  \|u - u_\rho\|_{L^6(B_\rho)}^\frac12 \leq C \|u\|_{L^2(B_\rho)}^\frac12 \|\nabla u\|_{L^2(B_\rho)}^\frac12.
\eeno
Then
\beno
r^{-2} \int_{Q_r} |u - u_\rho|^3 dx dt &\leq & C r^{-2} \int_{I_\rho} \|u\|_{L^2(B_\rho)}^\frac32 \|\nabla u\|_{L^2(B_\rho)}^\frac32 dt \\
&\leq &C r^{-2} \rho^2 A^\frac34_u(\rho) E^\frac34_u(\rho).
\eeno
The estimates of $C_{\sqrt n}(r)$ and $C_{\nabla \sqrt c}(r)$ are similar, we omit them. Finally, we have 
\ben\label{4C}
C_{\sqrt n, \nabla \sqrt c, u}(r) \leq C \left(\frac r \rho\right) C_{\sqrt n, \nabla \sqrt c, u}(\rho) + C\left(\frac \rho r\right)^2 A^\frac34_{\sqrt n, \nabla \sqrt c, u}(\rho) E^\frac34_{\sqrt n, \nabla \sqrt c, u}(\rho).
\een

{\bf Step IV: Estimate of the pressure $D(r)$.} Let $\eta(x) \geq 0$ be supported in $B_\rho$ with $\eta = 1$ in $B_\frac\rho2$, and
\beno
P_1(x,t) &=& \int_{\mathbb{R}^3} \frac1{4\pi|x-y|} (\partial_i \partial_j ((u_i-(u_i)_\rho) (u_j-(u_j)_\rho) \eta)(y,t) dy\\
&&+ \int_{\mathbb{R}^3} \frac1{4\pi|x-y|} \left[\nabla \cdot ((n-n_\rho)\nabla\phi \eta) + \nabla \cdot (n_\rho\nabla\phi \eta))\right](y,t) dy.
\eeno
In the following, we estimate the term $\nabla \cdot (n_\rho\nabla\phi \eta)$ in detail. Integration by parts and direct calculations yield that
\ben\label{n_rho}
\int_{\mathbb{R}^3} \frac1{4\pi|x-y|} \nabla \cdot (n_\rho\nabla\phi \eta)(y,t) dy \leq \int_{\mathbb{R}^3} \frac1{|x-y|^2} |n_\rho \nabla\phi \eta|(y,t) dy.
\een
By H\"{o}lder inequality, we obtain
\ben\label{|x-y|}
\left\|\int_{\mathbb{R}^3} \frac1{|x-y|^2} |n_\rho \nabla\phi \eta|(y,t) dy\right\|_{L^\frac32(B_\rho)} \leq \rho^\frac12 \left\|\int_{\mathbb{R}^3} \frac1{|x-y|^2} |n_\rho \nabla\phi \eta|(y,t) dy\right\|_{L^2(B_\rho)}
\een
Using Lemma \ref{lem 3}, there holds
\ben\label{n nabla psi}
\rho^\frac12 \left\|\int_{\mathbb{R}^3} \frac1{|x-y|^2} |n_\rho \nabla\phi \eta|(y,t) dy\right\|_{L^2(B_\rho)}
\leq C \rho^\frac12\|n_\rho \nabla\phi \eta\|_{L^\frac65(\mathbb{R}^3)}.
\een
Moreover, let
\beno
P_2(x,t) =P(x,t) - P_1(x,t)
\eeno
which implies that
\beno
\Delta P_2 = 0 \quad {\rm in} \quad B_\frac\rho2.
\eeno
Let $0 < 4r < \rho \leq 1$, by Lemma \ref{lem 1} , we have
\ben\label{ine:p2} \nonumber
\int_{{B_r}}|P_2|^{\frac32}dx &\leq& C\left(\frac r\rho\right)^{3} \int_{B_{\frac34 \rho}}|P_2|^{\frac32}dx \\
&\leq& C \left(\frac r\rho\right)^{3} \int_{B_{\rho}}|P|^{\frac32}dx+C \left(\frac r\rho\right)^{3}\int_{B_{\rho}}|P_1|^{\frac32}dx.
\een
Besides, Calderon-Zygmund estimates and Riesz potential estimates yield that
\ben\label{ine:p1} \nonumber
\int_{B_{\rho}}|P_1|^{\frac32}dx &\leq& C \int_{B_{\rho}}|u-u_\rho|^3+C \rho^{\frac 34}\left(\int_{B_{\rho}}|(n-n_\rho)\nabla \phi|^\frac65dx\right)^{\frac54} \\
&&+ C \rho^\frac34 \left(\int_{B_\rho} |n_\rho \nabla \phi \eta|^\frac65 dx\right)^\frac54.
\een
Combining (\ref{ine:p1}) and (\ref{ine:p2}) and noting that $r<\rho$, we have
\beno
r^{-2} \int_{Q_r} |P|^\frac32 &\leq& r^{-2} \int_{Q_r} |P_1|^\frac32 + r^{-2} \int_{Q_r} |P_2|^\frac32 \\
&\leq& C\left(1+\left(\frac r\rho\right)^{3}\right)r^{-2} \int_{Q_\rho} |P_1|^\frac32+ C r^{-2} \left(\frac r\rho\right)^{3} \int_{Q_{\rho}}|P|^{\frac32}dxdt \\
&\leq& C r^{-2} \int_{Q_{\rho}}|u-u_\rho|^3 + C r^{-2} \int_{I_\rho}\rho^{\frac 34}\left(\int_{B_{\rho}}|(n-n_\rho)\nabla \phi|^\frac65dx\right)^{\frac54}dt \\
&&+ C r^{-2} \int_{I_r}\rho^\frac34 \left(\int_{B_\rho} |n_\rho \nabla \phi \eta|^\frac65 dx\right)^\frac54 dt + C r^{-2} \left(\frac r\rho\right)^{3} \int_{Q_{\rho}}|P|^{\frac32}dxdt.
\eeno
Using H\"{o}lder inequality, we have
\beno\label{pressure}
r^{-2} \int_{Q_r} |P|^\frac32
&\leq& C \left(\frac\rho r\right)^2 \rho^{-2} \int_{Q_{\rho}}|u-u_\rho|^3 + C\|\nabla\phi\|_{L^\infty_{x}}^\frac32 \left(\frac\rho r\right)^2 \rho^\frac32 \left(\rho^{-\frac53} \int_{Q_\rho} |n-n_\rho|^\frac53dxdt\right)^\frac9{10} \nonumber\\
&& + C r^{-2} \int_{I_r} \rho^\frac34 \left(\int_{B_\rho} |n_\rho \nabla \phi \eta|^\frac65 dx\right)^\frac54 dt + C \left(\frac r\rho\right) \rho^{-2} \int_{Q_{\rho}}|P|^{\frac32}dx,
\eeno
by (\ref{4C}), which means
\ben\label{4D} \nonumber
D(r) &\leq & C\left(\frac r\rho\right) D(\rho) + C\left(\frac\rho r\right)^2 A_u(\rho)^\frac34 E_u(\rho)^\frac34\\&&+~C (1+\|\nabla \phi\|_{L^\infty}^{\frac32})\left(\frac\rho r\right)^2 \rho^\frac32 A_{\sqrt n}(\rho)^\frac34 E_{\sqrt n}(\rho)^\frac3{4} \\\nonumber
&&+~C(1+\|\nabla \phi\|_{L^\infty}^{\frac32})\left(\frac\rho r\right)^2 \rho^\frac32 A_{\sqrt n}(\rho)^\frac32.
\een

{\bf Step V: Iteration argument.}
Let 
\beno
G(r) = N(r) + D(r) + C_{\sqrt n, \nabla \sqrt c,u}(r).
\eeno
By $\eqref{4C}$, $\eqref{4D}$ and $\eqref{4N}$, for any $0 < 4r \leq \rho$, there holds
\beno
G(r) &\leq& C\left(\frac r\rho\right) G\left(\frac \rho2\right) + C\left(\frac \rho r\right)^2 \left[ \rho^{-3\gamma} E^{\frac34}_{\sqrt n}\left(\frac \rho2\right) \left(A_{\sqrt n}\left(\frac \rho2\right) + E_{\sqrt n}\left(\frac \rho2\right)\right)^\frac{3+6\gamma}4\right] \\
&&+ C\left(\frac \rho r\right)^2 A^\frac34_{\sqrt n, \nabla \sqrt c, u}\left(\frac \rho2\right) E^\frac34_{\sqrt n, \nabla \sqrt c, u}\left(\frac \rho2\right) +C \left(\frac\rho r\right)^2 \rho^\frac32 A^\frac34_{\sqrt n}\left(\frac \rho2\right) E^\frac3{4}_{\sqrt n}\left(\frac \rho2\right) \\
&&+ C\left(\frac\rho r\right)^2 \rho^\frac32 A^\frac32_{\sqrt n}\left(\frac \rho2\right)+C\left(\frac\rho r\right)^2 A^{\frac9{16}}_{\sqrt n}(\rho)E^{\frac34}_{\sqrt n}(\rho)\\
&\leq& C\left(\frac r\rho\right) G\left(\frac \rho2\right) + C\left(\frac \rho r\right)^2 \left(A_{\sqrt n, \nabla \sqrt c, u}\left(\frac \rho2\right) + E_{\sqrt n, \nabla \sqrt c, u}\left(\frac \rho2\right)\right) \\
&&\times (1+\|\nabla \phi\|_{L^\infty}^{\frac32}) \left[\rho^{-3\gamma} E^\frac{1+3\gamma}2 _{\sqrt n}\left(\frac \rho2\right)+ E^\frac12_{\sqrt n,\nabla \sqrt c,u}\left(\frac \rho2\right) + E^{\frac{5}{16}}_{\sqrt n}(\rho)+\rho^\frac32 A^\frac12_{\sqrt n}\left(\frac \rho2\right)\right]
.
\eeno
By $\eqref{4E}$, there holds
\beno
A_{\sqrt n, \nabla \sqrt c, u}\left(\frac \rho2\right) + E_{\sqrt n, \nabla \sqrt c, u}\left(\frac \rho2\right) \leq C\left(G(\rho) + 1\right),
\eeno
which means
\beno
&&G(r) \leq C(1+\|\nabla \phi\|_{L^\infty}^{\frac32}) G(\rho)\nonumber\\
&&\times \left\{\frac r\rho + \left(\frac \rho r\right)^2 \left[\rho^{-3\gamma} E^\frac{1+3\gamma}2_{\sqrt n}(\rho) + E^\frac12_{\sqrt n,\nabla \sqrt c,u}(\rho)+E^{\frac5{16}}_{\sqrt n}(\rho) + \rho^\frac32 A^\frac12_{\sqrt n}(\rho)\right] \right\}  \\
&&+ C(1+\|\nabla \phi\|_{L^\infty}^{\frac32})\left(\frac \rho r\right)^2 \left[\rho^{-3\gamma} E^\frac{1+3\gamma}2_{\sqrt n}(\rho) + E^\frac12_{\sqrt n,\nabla \sqrt c,u}(\rho)+E^{\frac5{16}}_{\sqrt n}(\rho)  + \rho^\frac32 A^\frac12_{\sqrt n}(\rho)\right] .
\eeno
Choosing $r = \theta \rho$ with $\theta \in (0,\frac12)$, there holds
\ben\label{ine:G} \nonumber
&&G(\theta \rho)\nonumber\\
 &\leq & C(1+\|\nabla \phi\|_{L^\infty}^{\frac32}) \left\{\theta + \theta^{-2}\left[\rho^{-3\gamma} E^\frac{1+3\gamma}2_{\sqrt n}(\rho)+E^{\frac5{16}}_{\sqrt n}(\rho) + E^\frac12_{\sqrt n,\nabla \sqrt c,u}(\rho) + \rho^\frac32 A^\frac12_{\sqrt n}(\rho)\right]\right\} G(\rho) \nonumber\\ \nonumber
&&+ C(1+\|\nabla \phi\|_{L^\infty}^{\frac32})\theta^{-2} \left[\rho^{-3\gamma} E^\frac{1+3\gamma}2_{\sqrt n}(\rho)+E^{\frac5{16}}_{\sqrt n}(\rho) + E^\frac12_{\sqrt n,\nabla \sqrt c,u}(\rho) + \rho^\frac32 A^\frac12_{\sqrt n}(\rho)
 \right] . \\
\een
Since $\|n\|_{L^\infty_t L^1_x} \leq \|n_0\|_{L^\infty_t L^1_x}$, we have
\ben\label{ine:1}
\rho^\frac32 A^\frac12_{\sqrt n}(\rho) = \rho \left(\sup_{t \in (-\rho^2,0)}\int_{B_\rho} n dx \right)^\frac12 \leq A_0 \rho,
\een
where $A_0=\|n_0\|_{L^\infty_t L^1_x}$ is a constant,  which depends on the norm of initial value. Let
\ben\label{eq:epsilon'}
\varepsilon=\frac{\varepsilon^8_1}{625(1+\|\chi\|_{0})^{80}(1+\|\nabla\phi\|_{L^\infty}+\|c_0\|_{L^\infty})^{160}}.\een
By $\eqref{assume:3}$, for any $\rho \in (0,1)$ without loss of generality, there holds 

\ben\label{ine:2}
E_{\sqrt n}(\rho) \leq \varepsilon \rho^{\delta_0}, \quad E_{\nabla \sqrt c,u}(\rho) \leq \varepsilon.
\een
Putting $\eqref{ine:1}$ and $\eqref{ine:2}$ into $\eqref{ine:G}$, we arrive
\beno
G(\theta \rho) &\leq& C \left[\theta + \theta^{-2} \left(\rho^{\frac12\delta_0 + (\frac32\delta_0-3)\gamma} \varepsilon^\frac{1+3\gamma}2 + \varepsilon^\frac5{16} + A_0 \rho\right)\right] G(\rho)(1+\|\nabla \phi\|_{L^\infty}^{\frac32}) \\
&&+ C \theta^{-2} \left(\rho^{\frac12\delta_0 + (\frac32\delta_0-3)\gamma} \varepsilon^\frac{1+3\gamma}2 + \varepsilon^\frac5{16} + A_0 \rho\right)(1+\|\nabla \phi\|_{L^\infty}^{\frac32}).
\eeno
Here the constant $C$ is independent on $\rho$, $\varepsilon$, $\theta$ and $A_0$.
If there exists $\gamma \in (0,\frac19]$ such that 
\ben\label{gammadelta0}
\frac12\delta_0 + (\frac32\delta_0-3)\gamma > 0,
\een
we have 
\beno
G(\theta \rho) &\leq& C(1+\|\nabla \phi\|_{L^\infty}^{\frac32}) \left[\theta + \theta^{-2} \left(\varepsilon^\frac5{16} + A_0 \rho\right)\right] G(\rho) \nonumber\\
&&+ C(1+\|\nabla \phi\|_{L^\infty}^{\frac32}) \theta^{-2} \left(\varepsilon^\frac5{16} + A_0 \rho\right).
\eeno
Obviously, there exists $\gamma \in (0,\frac19]$ satisfies $\eqref{gammadelta0}$ since for any $\delta_0 > 0$, there exists a small constant such that  
\beno
\gamma < \frac{\delta_0}{6-3\delta_0}.
\eeno
Now, choose $\theta = \theta_0 \in (0,\frac14)$ small enough such that
\beno
C\theta_0 \leq \frac 1{4(1+\|\nabla \phi\|_{L^\infty}^{\frac32})}.
\eeno
Then for 
\beno
\rho \leq \rho_0=\frac{\theta_0^2 \varepsilon}{8A_0 C(1+\|\nabla \phi\|_{L^\infty}^{\frac32})},
\eeno
we have 
\beno
G(\theta_0\rho) &\leq&\left(\frac14 + C(1+\|\nabla \phi\|_{L^\infty}^{\frac32}) \theta_0^{-2} \varepsilon^\frac5{16} + \frac18 \varepsilon\right) G(\rho) \nonumber\\
&&+ C (1+\|\nabla \phi\|_{L^\infty}^{\frac32})\theta_0^{-2} \varepsilon^\frac5{16}  + \frac18 \varepsilon.
\eeno
The constant $\varepsilon$ satisfy
\beno
\varepsilon^\frac1{16}\leq \frac{\theta_0^2}{8 C(1+\|\nabla \phi\|_{L^\infty}^{\frac32})}\leq \frac{1}{128 C^3(1+\|\nabla \phi\|_{L^\infty}^{\frac32})^3},
\eeno
due to (\ref{eq:epsilon'}).
We have 
\beno
G(\theta_0\rho_0) \leq \frac12 G(\rho_0) + 2 \varepsilon^\frac14.
\eeno
Noting that
\beno
G(\rho_0) = N(\rho_0) + C_{\sqrt n,\nabla \sqrt c,u}(\rho_0) + D(\rho_0) < \infty,
\eeno
by the classical iterative argument, we obtain
\beno
G(\theta_0^k \rho_0) \leq \frac 1{2^k} G(\rho_0) + 4\varepsilon^\frac 14.
\eeno
Let $k = k_0 = \left[\left(\ln 2\right)^{-1} \ln \left(G(\rho_0)\varepsilon^{-\frac14}\right)\right] + 1$, and by (\ref{eq:epsilon'}) we have
\beno
G(\theta_0^{k_0} \rho_0) \leq 5\varepsilon^\frac 14\leq \frac{\varepsilon^2_1}{(1+\|\chi\|_{0})^{20}(1+\|\nabla\phi\|_{L^\infty}+\|c_0\|_{L^\infty})^{40}}.
\eeno

By Proposition \ref{lem:1torho0}, we finish the proof.

\section{Proof of Corollary \ref{thm:4}}

\begin{definition}
For a set $E\subset \mathbb{R}^{n+1}$ and $\alpha\geq 0$, $Q_r(z_0)=B_r(x_0)\times(t_0-r^2,t_0)$ for $z_0=(x_0,t_0)$. Denote by $\mathcal{P}^\alpha(E)$ its $\alpha-$dimensional parabolic Hausdorff measure, namely,
\beno
\mathcal{P}^\alpha(E)=\liminf_{\delta\rightarrow0^{+}} \left\{\sum_{j=1}^{\infty}r_j^{\alpha}: E\subset\bigcup_jQ(z_j,r_j),r_j\leq \delta\right\}.
\eeno
\end{definition}

We will use a parabolic version of the Vitali covering lemma:
Let $\{J={Q_{z_\alpha,r_\alpha}}\}_\alpha$ be any collection of parabolic cylinders contained in a bounded subset of $\mathbb{R}^{4}$, and noting $J=J_x\times J_t$, there exist disjoint $Q_{z_j,r_j}\in J, j\in N$, such that any cylinder in $J$ is contained in $Q_{z_j,5r_j}$ for some $j$.

Letting
\beno
Q^{*}((x,t),r)=B(x,r)\times(t-\frac78r^2,t+\frac18r^2),
\eeno
it is a translation in time of $Q((x,t),r)$. Besides, $Q(z,\frac r2)\in Q^{*}(z,r)$.
Let $\mathcal{S}_R =\mathcal{S} \cap R$ for any compact set $R \subset Q_\frac12$. Fix any $\delta > 0$. 
Assume that for any $z_j = (x_j,t_j) \in \mathcal{S}_R$, by Theorem \ref{thm:3}, there exists $0 < r_{z_j}=r_j < \frac\delta{10}$ such that 
\beno
\int_{Q_{r_j}(z_0)} |\nabla \sqrt n|^2 + |\nabla^2 \sqrt c|^2 + |\nabla u|^2 \geq \frac12 r_j^{1+\delta_0} \varepsilon_4.
\eeno
Thus,
\beno
\mathcal{S}_R \subset \bigcup_{j\in N} Q^{\ast}(z_j,2r_{z_j}).
\eeno
Let $r_j = r_{z_j}$ and $\{Q_{r_j}(z_j)\}_{j\in \mathbb{N}}$ be the countable disjoint subcover guaranteed by the Vitali covering lemma, then
\beno
\mathcal{S}_R \subset \bigcup_{j \in \mathbb{N}} Q^{\ast}(z_j, {10r_j})\quad 10r_j<\delta.
\eeno
Note that $Q({z_j, \frac {r_j}2})\in Q^{*}({z_j,{r_j}})$ is disjoint, then
\beno
\sum 10r_j^{1+\delta_0} &\leq& \sum_j \frac{20}{\varepsilon_4} \left(\int_{Q_{r_j}(z_0)} |\nabla \sqrt n|^2 + |\nabla^2 \sqrt c|^2 + |\nabla u|^2\right).
\eeno
Since the bounded-ness of the right part, we have
\beno
\sum 10r_j^{1+\delta_0} < + \infty,
\eeno
which means that $S_R$ has Labesgue measure $0$. Since the finite covering theorem, we know that any open neighborhood $J = J_x \times J_t \subset {Q_1}$ of $S_R$ satisfies $Q_{r_z}(z) \subset J$,
\beno
\sum 5r_j^{1+\delta_0} \leq \sum_j \frac{C}{\varepsilon_0} \left(\int_{Q_{r_j}(z_0)} |\nabla \sqrt n|^2 + |\nabla^2 \sqrt c|^2 + |\nabla u|^2\right).
\eeno
By the arbitrarily of $J$, we can choose $J$ with arbitrarily small Lebesgue measure, therefore, the right side is arbitrarily small. Since $\delta > 0$ is arbitrary, we have
\beno
{\mathcal{P}}^{1+\delta_0}(\mathcal{S}_R) = 0.
\eeno
By the arbitrarily of $R$, we have
\beno
{\mathcal{P}}^{1+\delta_0}(\mathcal{S}) = 0.
\eeno

\section{Proof of Theorem \ref{thm:1}}
In this section, we follow the same route as in  \cite{CLW 2022}. The main difficulties lie in the estimates of $\int n\ln n$ and $\int|\Delta c|^2$.
Firstly, we present the following elementary lemma as a preparation for some estimates.
\begin{lemma}\label{heat kernel}
Set
\beno
\Psi_{n}(x,t) = \frac{1}{(r_{n}^2 - t )^\frac 32} \exp(- \frac{|x |^2}{4(r_{n}^2 - t )}),
\eeno
where $(x,t) \in \mathbb{R}^3 \times (-\infty,r_{n}^2)$. Letting $\xi(x,t)$ in $ Q_{r_3}$ be  a suitable cut-off function, which satisfies
\begin{eqnarray*}
 \xi(x,t)=\left\{
    \begin{array}{llll}
    \displaystyle 1,\quad {\rm in}\quad Q_{r_4}, \\
    \displaystyle 0,\quad {\rm in}\quad Q_{r_3}^{c},\\
    \end{array}
 \right.
\end{eqnarray*}
the properties of $\phi_n = \Psi_n \xi$ are as follows:
\begin{itemize}
\item [i)]
 $C^{-1} r_{n}^{-3} \leq \phi_n(x,t) \leq C r_{n}^{-3}$ on $Q_{r_{n}}$ for $n\geq 2$;\\
\item [ii)] $\phi_n(x,t) \leq C r_k^{-3}$  for  $(x,t) \in Q_{r_k} \setminus Q_{r_{k+1}}$, \quad $1<k\leq n$;\\
\item [iii)]$|\nabla \phi_n(x,t) |\leq Cr_n^{-4}$ in $Q_{r_n}$,\quad $n\geq 2$;\\
\item [iv)] $|\nabla \phi_n(x,t)| \leq C r_k^{-4}$ on $Q_{r_{k-1}} \setminus Q_{r_{k}}$, \quad $1<k\leq n$;\\
\item [v)]$\left|\partial_t \phi + \Delta \phi\right| \leq C$ on $Q_{r_3}$;\\
\item [vi)] $\partial_t \phi + \Delta \phi=0$ on $Q_{r_4}$,
\end{itemize}
where $C$ is an absolute constant.
\end{lemma}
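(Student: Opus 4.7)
The plan is to prove each item by direct computation, splitting the analysis into the region where the cutoff $\xi$ is identically one versus where it transitions. The function $\Psi_n$ is, up to a harmless normalizing constant, the backward heat kernel centered at the space-time point $(0, r_n^2)$, so differentiating directly shows $(\partial_t + \Delta)\Psi_n = 0$ on $\mathbb{R}^3 \times (-\infty, r_n^2)$. This immediately yields (vi): on $Q_{r_4}$ we have $\xi \equiv 1$, so $\phi_n = \Psi_n$, and $Q_{r_4}$ lies in the domain of smoothness of $\Psi_n$.

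For the pointwise estimates (i) and (iii), I would verify that on $Q_{r_n}$ one has $r_n^2 - t \in [r_n^2, 2r_n^2]$ and $|x|^2/(r_n^2 - t) \leq 1$, so the exponential factor of $\Psi_n$ is trapped in $[e^{-1/4},1]$ while the prefactor $(r_n^2 - t)^{-3/2}$ is of size $r_n^{-3}$; the gradient picks up one extra factor $|x|/(2(r_n^2 - t)) \lesssim r_n^{-1}$, giving (iii). For the annular bounds (ii) and (iv) the core observation is the elementary inequalities $\Psi_n(x,t) \leq C \min\{(r_n^2 - t)^{-3/2}, |x|^{-3}\}$ and $|\nabla \Psi_n|(x,t) \leq C \min\{(r_n^2 - t)^{-2}, |x|^{-4}\}$, both coming from the standard bound $u^m e^{-u^2/4} \leq C_m$ applied with $u = |x|/\sqrt{r_n^2 - t}$. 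On $Q_{r_k} \setminus Q_{r_{k+1}}$ at least one of $|x| \geq r_{k+1}$ or $-t \geq r_{k+1}^2$ must hold, so the corresponding term in each minimum is $\lesssim r_{k+1}^{-3}$ (respectively $r_{k+1}^{-4}$); under the implicit assumption that the scales $\{r_k\}$ form a geometric sequence so $r_{k+1} \sim r_k$, (ii) and (iv) follow.

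Finally, for (v) the Leibniz identity
\beno
(\partial_t + \Delta)\phi_n = \Psi_n (\partial_t + \Delta)\xi + 2 \nabla \Psi_n \cdot \nabla \xi
\eeno
is supported in $Q_{r_3} \setminus Q_{r_4}$, since $\xi$ is constant on $Q_{r_4}$ and vanishes outside $Q_{r_3}$. On this annulus $(r_n^2 - t) + |x|^2 \gtrsim r_4^2$, so both $\Psi_n$ and $|\nabla \Psi_n|$ are bounded by constants depending only on $r_3, r_4$; combined with the uniform bounds on $\xi$ and its derivatives, this yields (v). The whole argument is routine heat-kernel bookkeeping; the only subtle point is that the ``absolute constant'' $C$ is implicitly allowed to depend on the fixed scales $r_3, r_4$ and on the chosen cutoff $\xi$, but not on $n$ or on the scales $r_k$ for $k \geq 4$, so I do not expect any substantive obstacle.
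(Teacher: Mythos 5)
The paper itself gives no proof of this lemma: it is stated as an elementary preparation (it is the standard Caffarelli--Kohn--Nirenberg-type test-function construction, with $r_k=2^{-k}$ fixed in Proposition \ref{lem:induction}), so there is nothing to compare against line by line. Your verification is the standard one and is essentially correct: $\Psi_n$ solves the backward heat equation, giving (vi); on $Q_{r_n}$ one has $r_n^2-t\in(r_n^2,2r_n^2)$ and $|x|^2/(4(r_n^2-t))\le \tfrac14$, giving (i) and (iii); the bounds $\Psi_n\le C\min\{(r_n^2-t)^{-3/2},|x|^{-3}\}$ and $|\nabla\Psi_n|\le C\min\{(r_n^2-t)^{-2},|x|^{-4}\}$ together with the dyadic relation $r_{k+1}=r_k/2$ (so your ``implicit geometric scales'' assumption is exactly what the paper fixes) give (ii) and (iv); and since $(\partial_t+\Delta)\Psi_n=0$, the expression $(\partial_t+\Delta)\phi_n=\Psi_n(\partial_t+\Delta)\xi+2\nabla\Psi_n\cdot\nabla\xi$ is supported where $\xi$ is non-constant, where $\Psi_n$ and $\nabla\Psi_n$ are bounded by absolute constants, giving (v).

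Two small bookkeeping points you should make explicit. First, in (iii) and (iv) you estimate only $\nabla\Psi_n$; since $\phi_n=\Psi_n\xi$, the product-rule term $\Psi_n\nabla\xi$ must also be controlled, which is immediate from (ii) and $|\nabla\xi|\le C$ because $C r_k^{-3}\le C r_k^{-4}$. Second, the lower bound in (i) concerns $\phi_n$, not $\Psi_n$, so it needs $\xi\equiv1$ on $Q_{r_n}$, i.e. $Q_{r_n}\subseteq Q_{r_4}$, which holds only for $n\ge4$; for $n=2,3$ the lower bound as literally stated fails on $Q_{r_n}\setminus Q_{r_3}$ where $\phi_n=0$. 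This is an imprecision in the lemma's statement rather than in your method (in the induction the test function is used with the lower bound only on the small cylinder $Q_{r_{N+1}}$), but a complete write-up should either restrict the two-sided bound to $n\ge4$ or note how the cutoff is handled in the low-$n$ cases.
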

In this section, we would like to prove Theorem \ref{thm:1} by iterative argument and mathematical induction. It is sufficient to prove Proposition \ref{lem:induction}.

\begin{proposition}\label{lem:induction}
Assume that $(n,c,u)$ is a suitable weak solutions of (\ref{eq:GKS}) in $\mathbb{R}^3\times(-1,0)$ and $r_k = 2^{-k}$. Under the condition $\eqref{eq:condition}$, for any integer $k \geq 1$, there holds
\ben\label{ine:induction}
&&r_k^{-3}\sup_{-r_k^2<t<0} \int_{B_{r_k}} n  + |n \ln n| + |\nabla \sqrt{c}|^2 + |u|^2 \nonumber\\
&&+ r_k^{-3} \int_{Q_{r_k}} |\nabla \sqrt{n}|^2 + |\nabla^2 \sqrt{c}|^2 + |\nabla u|^2 + r_k^{-4} \int_{Q_{r_k}} |P-\bar P|^\frac32 \leq C_1 \varepsilon_0^\frac12,
\een
where $C_1>1$ is an absolute constant.
\end{proposition}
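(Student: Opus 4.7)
The plan is to prove (\ref{ine:induction}) by induction on $k$, following the iterative scheme of Caffarelli--Kohn--Nirenberg \cite{CKN} adapted in \cite{CLW 2022}. The mechanism is to test the local energy inequality (\ref{local energy inequality}) against the heat-kernel-based cutoff $\phi_n=\Psi_n\xi$ of Lemma \ref{heat kernel}; properties (i)--(vi) ensure that its pointwise lower bound on the inner cylinder $Q_{r_n}$ recovers the scale-invariant quantities at $r_n$, while its pointwise and gradient bounds on the dyadic shells $Q_{r_j}\setminus Q_{r_{j+1}}$ allow each nonlinear term on the right-hand side to be decomposed shell by shell and the inductive hypothesis to be applied one scale at a time.

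\textbf{Base case.} For $k=1$ the bound (\ref{ine:induction}) at $r_1=\tfrac12$ follows directly from (\ref{eq:condition}) up to universal constants, since the sup and gradient pieces are obtained by restricting from $Q_1$ to $Q_{1/2}$, and $\int_{Q_{1/2}}|P-\bar P|^{3/2}\le C\int_{Q_1}|P|^{3/2}\le C\varepsilon_0$. Since $\varepsilon_0\le 1$ gives $\varepsilon_0\le \varepsilon_0^{1/2}$, an absolute choice of $C_1$ suffices.

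\textbf{Inductive step.} Assuming (\ref{ine:induction}) at every index $\le k$, I insert $\psi=\phi_{k+1}$ into (\ref{local energy inequality}). The lower bound $\phi_{k+1}\ge C^{-1}r_{k+1}^{-3}$ on $Q_{r_{k+1}}$ delivers the scale-invariant quantities at $r_{k+1}$ on the left. On the right, the support of $\phi_{k+1}$ decomposes into $Q_{r_{k+1}}$ and the dyadic shells $Q_{r_j}\setminus Q_{r_{j+1}}$, on which $\phi_{k+1}\lesssim r_j^{-3}$ and $|\nabla\phi_{k+1}|\lesssim r_j^{-4}$. Each cubic nonlinearity (convection, chemotaxis-convection, gravity, and $(P-\bar P)u$) then reduces to a sum $\sum_j r_j\cdot(\text{invariants at }r_j)^{3/2}$. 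By the inductive hypothesis each invariant is $\le C_1\varepsilon_0^{1/2}$ and the geometric factor sums to an absolute constant. For the additional pressure contribution $r_{k+1}^{-4}\int_{Q_{r_{k+1}}}|P-\bar P|^{3/2}$ I would mimic Step IV of Section 2: split $P=P_1+P_2$ with $P_1$ a Newtonian potential of the localized nonlinearities and $P_2$ harmonic on a smaller ball, then apply Calder\'on--Zygmund to $P_1$ and a harmonic-decay bound to $P_2$. Collecting these estimates yields
\begin{equation*}
\text{LHS of (\ref{ine:induction}) at }k+1\le C_\ast(1+\|\chi\|_0)(1+\|c_0\|_{L^\infty}+\|\nabla\phi\|_{L^\infty})^2\,(C_1\varepsilon_0^{1/2})^{3/2}+C_\ast\varepsilon_0.
\end{equation*}

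\textbf{Main obstacle.} As the authors emphasize, the genuine difficulty lies in the non-scale-invariant terms $\int n\ln n\,(\partial_t\psi+\Delta\psi)$, $\int n\ln n\,u\cdot\nabla\psi$, $\int n\ln n\,\chi(c)\nabla c\cdot\nabla\psi$, together with the coupled $|\Delta\sqrt c|^2$ piece. Because $n\ln n$ is sign-indefinite a direct product-of-invariants estimate fails; I would split $\{n<1\}$ and $\{n\ge 1\}$. On $\{n<1\}$, use $|n\ln n|\le Cn^{3/4}$ (subcritical in the natural scaling) together with the base-scale $L^1\cap L\log L$ control inherited from the energy inequality. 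On $\{n\ge 1\}$, dominate $\ln n\le Cn^\gamma$ with a small $\gamma>0$ and apply Sobolev embedding to $\sqrt n$, exploiting the inductive bound on $E_{\sqrt n}$ at scale $r_j$. The $\int|\Delta\sqrt c|^2$ term is absorbed into the left-hand side by retaining the companion $(\sqrt c)^{-2}|\nabla\sqrt c|^4$ in (\ref{local energy inequality}) and using $\sqrt c\le\|c_0\|_{L^\infty}^{1/2}$. Because the resulting right-hand side carries the exponent $3/2$ on $C_1\varepsilon_0^{1/2}$, the precise smallness of $\varepsilon_0$ prescribed by (\ref{eq:condition}) (with its powers of $\|\chi\|_0$, $\|c_0\|_{L^\infty}$ and $\|\nabla\phi\|_{L^\infty}$) ensures $C_\ast(C_1\varepsilon_0^{1/2})^{3/2}\le\tfrac12 C_1\varepsilon_0^{1/2}$, closing the induction.
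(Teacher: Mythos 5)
Your overall scheme (dyadic induction with the heat-kernel cutoff of Lemma \ref{heat kernel} in the local energy inequality, shell-by-shell estimates of the nonlinear terms, smallness closing via an extra positive power of $\varepsilon_0$) is the same as the paper's, and your treatment of the $n\ln n$ right-hand-side terms by splitting $\{n<1\}$/$\{n\ge 1\}$ is essentially the paper's estimate \eqref{ine:n ln n}. However, there are two genuine gaps. First, the pressure step: ``mimic Step IV of Section 2'' is a two-scale argument designed for comparable radii $r=\theta\rho$, and it does not close the induction here, where $r_{N+1}$ must be compared with the unit scale. If you localize the Newtonian potential $P_1$ at a scale comparable to $r_{N+1}$, the harmonic remainder $P_2$ is only controlled through the inductive bound at a nearby scale, which yields $C\,C_1\varepsilon_0^{1/2}$ with an absolute but possibly large constant $C$ and \emph{no extra power of} $\varepsilon_0$; such a term cannot be absorbed by any choice of $\varepsilon_0$, since $C_1$ appears linearly on both sides. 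If instead you localize at scale $1$, Calder\'on--Zygmund gives only $r_{N+1}^{-4}\varepsilon_0$, which blows up. The paper resolves this by inserting the dyadic partition of unity $1=\sum_\ell(\xi_\ell-\xi_{\ell+1})+\xi_{k+1}$ inside the Newtonian potential, estimating the far shells pointwise via the kernel decay $|x-y|\ge r_{\ell+3}$ together with the inductive hypothesis at \emph{every} intermediate scale (terms $T_{111},T_{121}$), applying CZ/Riesz only to the near-field piece (terms $T_{112},T_{122}$), and comparing the harmonic part all the way back to the unit scale, where the hypothesis \eqref{eq:condition} supplies the quadratically small bound $\varepsilon_0$ (giving $T_2\le C\varepsilon_0+\ldots$). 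This multiscale decomposition is the essential device missing from your sketch.

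Second, the quantities $r_{k+1}^{-3}\sup_t\int_{B_{r_{k+1}}}n$ and $r_{k+1}^{-3}\sup_t\int_{B_{r_{k+1}}}|n\ln n|$ are part of what must be propagated, but the local energy inequality \eqref{local energy inequality} contains only the \emph{signed} quantity $\int n\ln n\,\psi$ on its left side and contains $\int n\,\psi$ not at all. Your appeal to ``base-scale $L^1\cap L\log L$ control'' cannot give the required smallness, since $r_{k+1}^{-3}$ times a fixed base-scale bound diverges as $k\to\infty$. The paper needs two extra steps: testing the $n$-equation itself with the same cutoff (Step 3) to get $\sup_t\int n\,\psi$ at scale $r_{N+1}$, and then converting the signed bound into $|n\ln n|$ via $n|\ln n|\le n\ln n+2n|\ln n|\mathbf{1}_{\{n<1\}}$ and $n^{1/30}|\ln n|\le 30e^{-1}$ (Step 4). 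A similar (minor) omission is the passage from $|\Delta\sqrt c|^2$, which is what the local energy inequality controls, to the full Hessian $|\nabla^2\sqrt c|^2$ required in \eqref{ine:induction}, done in the paper by an integration by parts with a cutoff (Step 5); absorbing via the $(\sqrt c)^{-2}|\nabla\sqrt c|^4$ term, as you suggest, does not address this point. With these three ingredients added, your outline matches the paper's proof.
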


\begin{proof}

Obviously, (\ref{eq:condition}) implies that (\ref{ine:induction}) holds for $k = 1$. Assume that (\ref{ine:induction}) holds for the case of  $k=2,\cdots, N$. Next we would like to prove (\ref{ine:induction}) comes true when $k=N+1$.

{\bf Step I: Estimates from the local energy inequality.}

 Taking $\psi=\phi_{N+1}$ as a test function in the local energy inequality (\ref{local energy inequality}) and taking $\Omega=B_{r_3}$, we can write it as follows:

{\footnotesize

\beno
&&\int_{B_{r_3}} (n \ln n ~ \psi)(\cdot,t) +  C^{-1}r_{N+1}^{-3} \int_{Q_{r_3}} |\nabla \sqrt{n}|^2 +\frac{1}{C\Theta_0} r_{N+1}^{-3} \int_{B_{r_3}} (|\nabla \sqrt{c}|^2 )(\cdot,t)\\
&&+\frac{1}{C\Theta_0} r_{N+1}^{-3}\int_{Q_{r_3}} |\Delta \sqrt{c}|^2 +\frac{1}{C\Theta_0}\|{c_{0}}\|_{L^{\infty}} r_{N+1}^{-3} \int_{B_{r_3}}(|u|^2)(\cdot,t) \\
&&+ \frac{1}{C\Theta_0}\|{c_{0}}\|_{L^{\infty}} r_{N+1}^{-3} \int_{Q_{r_3}} |\nabla u|^2 +\frac{1}{C\Theta_0} r_{N+1}^{-3} \int_{Q_{r_3}} (\sqrt{c})^{-2} |\nabla \sqrt{c}|^4\\
&\leq&\int_{Q_{r_3}} n \ln n ~ (\partial_t \psi + \Delta \psi) + \int_{Q_{r_3}} n \ln n ~ u \cdot \nabla \psi +\int_{Q_{r_3}}n\chi(c)\nabla c\cdot \nabla \psi \\
&&+\int_{Q_{r_3}}n\ln n ~ \chi(c)\nabla c\cdot\nabla\psi + \frac{2}{\Theta_0}\int_{Q_{r_3}} |\nabla \sqrt{c}|^2 (\partial_t \psi + \Delta \psi) + \frac{2}{\Theta_0} \int_{Q_{r_3}} |\nabla \sqrt{c}|^2 u \cdot \nabla \psi\\
&&+\frac{18}{\Theta_0}||c_{0}||_{L^{\infty}}\int_{Q_{r_3}} |u|^2 \left(\partial_t \psi +\Delta \psi\right) +\frac{18}{\Theta_0}||c_{0}||_{L^{\infty}}\int_{Q_{r_3}} |u|^2 u\cdot\nabla\psi\\
&&+\frac{36}{\Theta_0}||c_{0}||_{L^{\infty}}\int_{Q_{r_3}} (P - \bar{P}) u \cdot \nabla \psi - \frac{36}{\Theta_0}||c_{0}||_{L^{\infty}}\int_{Q_{r_3}} n\nabla\phi \cdot u \psi \\
&:=& I_1 + I_2 + \cdots + I_{10},
\eeno
}
where $\bar{P}$ denotes the mean value of $P$ in  $B_{r_3}$.

Next, we estimate $I_1$ to $I_{10}$ term by term.

{\bf \underline{Estimate of $I_1$}:}
By $\eqref{ine:induction}$ and Lemma \ref{heat kernel}, we have
\beno
I_1 \leq C \int_{Q_{r_3}} |n \ln n| \leq C \varepsilon_0.
\eeno

{\bf \underline{Estimate of $I_2$}:}
In order to estimate $I_2$, we need the estimate of $n \ln n$. By embedding inequality
\beno
\|f\|_{L^q_tL^p_x}^2\leq C\left(\|f\|^2_{L^\infty_tL^2_x}+\|\nabla f\|^2_{L^2_tL^2_x}\right),\quad{\rm for}\quad\frac2q+\frac3p=\frac32,~~2\leq p\leq 6,
\eeno
and (\ref{ine:induction}) we have
\ben\label{ine:ncu}
\|n\|_{L_{t,x}^\frac53(Q_{r_k})} + \|\nabla \sqrt c\|_{L_{t,x}^\frac{10}3(Q_{r_k})}^2 + \|u\|_{L_{t,x}^\frac{10}3(Q_{r_k})}^2 \leq C C_1 r_k^3 \varepsilon_0^\frac12.
\een
Note that
\ben\label{n_0 1}
\lim_{n \rightarrow 0} n^\frac16 \ln n = 0,
\een
and
\ben\label{n_1 1}
\lim_{n \rightarrow \infty} n^{-\frac16}|\ln n|^{\frac32} = 0.
\een
Decompose $Q_{r_{k}}$ into $Q_{r_{k}} \cap \{n \leq 100\}$ and $Q_{r_{k}} \cap \{n>100\}$, and by (\ref{n_0 1}) and (\ref{n_1 1}), we arrive at
\beno\label{ine:estimate n ln n}
\int_{Q_{r_{k}}} |n \ln n|^{\frac32} \leq C \int_{Q_{r_{k}} \cap \{ n(x) \leq 100\}} |n|^{\frac43} + C \int_{Q_{r_{k}} \cap \{ n(x) > 100\}} |n|^\frac53,
\eeno
which is controlled by
\begin{equation}\label{ine:n ln n}
	\begin{aligned}
	\int_{Q_{r_{k}}} |n \ln n|^{\frac32}&\leq C \int_{Q_{r_{k}}} |n|^{\frac43} + r_{k}^5C C_1^\frac53 \varepsilon_0^\frac56 \\
	&\leq C r_{k}^5 C_1^{\frac43} \varepsilon_0^{\frac23} + C r_{k}^5 C_1^\frac53 \varepsilon_0^\frac56\leq  C r_{k}^5 C_1^\frac53 \varepsilon_0^\frac23,
	\end{aligned}
\end{equation}
where $C$ is an absolute constant and $C_1$ is from (\ref{ine:induction}). 
By $\eqref{ine:n ln n}$, $\eqref{ine:ncu}$, H\"{o}lder's inequality and Lemma \ref{heat kernel} for the property of $\psi$, we have
\beno
I_2 &\leq & C\sum_{k = 1}^N \int_{Q_{r_k} \setminus Q_{r_{k+1}}} \left|n \ln n ~ u \cdot \nabla \psi\right| + C\int_{Q_{r_{N+1}}} \left|n \ln n ~ u \cdot \nabla \psi\right|\\
&\leq& \sum_{k=1}^N C r_k^{-4} \|n \ln n\|_{L^{\frac32}(Q_{r_k})} \|u\|_{L^\frac{10}3(Q_{r_k})} r_k^{\frac16} + C r_{N+1}^{-4} \|n \ln n\|_{L^{\frac32}(Q_{r_N})} \|u\|_{L^\frac{10}3(Q_{r_N})} r_{N+1}^{\frac16} \\
&\leq& C C_1^\frac{29}{18} \sum_{k=1}^N r_k^{-4} r_k^{\frac16} r_k^\frac{10}{3} \varepsilon_0^\frac{4}{9} r_k^{\frac32} \varepsilon_0^\frac14 +C C_1^\frac{29}{18} r_{N+1}^{-4} r_{N+1}^{\frac16} r_N^\frac{10}{3} r_N^\frac32 \varepsilon_0^\frac94 \varepsilon_0^\frac14 \\
&\leq& C C_1^\frac{29}{18} \varepsilon_0^\frac{25}{36}.
\eeno

{\bf \underline{Estimate of $I_3$}. }
Using $\eqref{ine:n ln n}$, $\eqref{ine:ncu}$, H\"{o}lder's inequality and Lemma \ref{heat kernel} for the property of $\psi$, we get
\beno
I_3 &\leq & C \|\chi\|_0 \sum_{k=1}^N \int_{Q_{r_k} \setminus Q_{r_{k+1}}} \left|n \nabla c \cdot \nabla \psi\right| + C \|\chi\|_0 \int_{Q_{r_{N+1}}}\left| n \nabla c \cdot \nabla \psi \right| \\
&\leq& C \|\chi\|_0 \|c_0\|_{L^\infty}^\frac12 \sum_{k=1}^N r_k^{-4} \|n\|_{L^\frac53(Q_{r_k})} \|\nabla \sqrt{c}\|_{L^\frac{10}3(Q_{r_k})} r_k^\frac12 \\
&&+ C \|\chi\|_0 \|c_0\|_{L^\infty}^\frac12 r_{N+1}^{-4} \|n\|_{L^\frac53(Q_{r_N})} \|\nabla \sqrt{c}\|_{L^\frac{10}3(Q_{r_N})} r_{N+1}^\frac12 \\
&\leq& C \|\chi\|_0 \|c_0\|_{L^\infty}^\frac12 C_1^\frac32 \sum_{k=1}^N r_k^{-4} r_k^3 \varepsilon_0^\frac12 r_k^\frac32 \varepsilon_0^\frac14 r_k^\frac12 + C \|\chi\|_0 \|c_0\|_{L^\infty}^\frac12 C_1^\frac32 r_{N+1}^{-4} r_N^3 \varepsilon_0^\frac12 r_N^\frac32 \varepsilon_0^\frac14 r_k^\frac12 \\
&\leq& C \|\chi\|_0 \|c_0\|_{L^\infty}^\frac12 C_1^\frac32 \varepsilon_0^\frac34.
\eeno

{\bf \underline{Estimate of $I_4$}. } Since $u$ is similar as $\nabla \sqrt{c}$, using $\eqref{ine:n ln n}$, $\eqref{ine:ncu}$, H\"{o}lder's inequality and Lemma \ref{heat kernel} for the property of $\psi$ agian, we acquire
\beno
I_4 \leq C \|\chi\|_0 \|c_0\|_{L^\infty}^\frac12 C_1^\frac{29}{18} \varepsilon_0^\frac{25}{36}.
\eeno

{\bf \underline{Estimate of $I_5$ and $ I_{7} $}.}
$\eqref{ine:induction}$ and Lemma \ref{heat kernel} for the property of $\psi$ yield that
\beno
I_5 \leq \frac C {\Theta_0} \int_{Q_{r_3}} |\nabla \sqrt c|^2 \leq\frac C {\Theta_0} \varepsilon_0.
\eeno
Similarly, we get the estimate of $I_7$ as follows:
\beno
I_7 \leq  \frac{C}{\Theta_0} \|c_0\|_{L^\infty} \int_{Q_{r_3}} |u|^2 \leq  \frac{C}{\Theta_0} \|c_0\|_{L^\infty} \varepsilon_0.
\eeno

{\bf \underline{Estimate of $I_6$, $ I_{8}$ and $ I_{10} $}.}
Similar as the estimate of $I_2$, by $\eqref{ine:n ln n}$, $\eqref{ine:ncu}$, embedding inequality, H\"{o}lder's inequality and Lemma \ref{heat kernel} for the property of $\psi$, we obtain
\beno
I_6 &\leq &  \frac{C}{\Theta_0} \sum_{k=1}^N \int_{Q_{r_k} \setminus Q_{r_{k+1}}} |\nabla \sqrt{c}|^2 \left|u \cdot \nabla \psi\right| + \frac{C}{\Theta_0}\int_{Q_{r_{N+1}}} |\nabla \sqrt{c}|^2 \left|u \cdot \nabla \psi \right|\\
&\leq&  \frac{C}{\Theta_0} \sum_{k=1}^N r_k^{-4} \|\nabla \sqrt{c}\|_{L^\frac{10}3(Q_{r_k})}^2 \|u\|_{L^\frac{10}3(Q_{r_k})} r_k^\frac12 +  \frac{C}{\Theta_0} r_{N+1}^{-4} \|\nabla \sqrt{c}\|_{L^\frac{10}3(Q_{r_N})}^2 \|u\|_{L^\frac{10}3(Q_{r_N})} r_{N+1}^\frac12 \\
&\leq&  \frac{C}{\Theta_0} C_1^\frac32 \sum_{k=1}^N r_k^{-4} r_k^3 \varepsilon_0^\frac12 r_k^\frac32 \varepsilon_0^\frac14 r_k^\frac12 +  \frac{C}{\Theta_0} C_1^\frac32 r_{N+1}^{-4} r_N^3 \varepsilon_0^\frac12 r_N^\frac32 \varepsilon_0^\frac14 r_{N+1}^\frac12 \\
&\leq& \frac{C}{\Theta_0} C_1^\frac32 \varepsilon_0^\frac34.
\eeno
In the same way, we get the estimates of $I_8$ and $I_{10}$ as follows:
\beno
I_8 \leq C \frac{1}{\Theta_0} \|c_0\|_{L^\infty} C_1^\frac32 \varepsilon_0^\frac34,
\eeno 
and
\beno
I_{10} \leq C \frac{1}{\Theta_0} \|c_0\|_{L^\infty} \|\nabla \phi\|_{L^\infty} C_1^\frac32 \varepsilon_0^\frac34.
\eeno

{\bf \underline{Estimate of $I_9$}.}
Using (\ref{ine:induction}) and $\eqref{ine:ncu}$, we have
\beno
I_9 &\leq& \frac{18}{\Theta_0} \|c_0\|_{L^\infty} \sum_{k=1}^N r_k^{-4} \int_{Q_{r_k} \setminus Q_{r_{k+1}}} |P-\bar P| |u| + \frac{18}{\Theta_0} \|c_0\|_{L^\infty} r_{N+1}^{-4}\int_{Q_{r_{N+1}}} |P-\bar P| |u| \\
&\leq&  \frac{C}{\Theta_0} \|c_0\|_{L^\infty} \sum_{k=1}^N r_k^{-4}\left(\int_{Q_{r_k}} |P-\bar P|^\frac32 \right)^\frac23 \left(\int_{Q_{r_k}} |u|^\frac{10}{3} \right)^\frac{3}{10}r_k^\frac16 \\
&&+~ \frac{C}{\Theta_0} \|c_0\|_{L^\infty} r^{-4}_{N+1} \left(\int_{Q_{r_N}} |P-\bar P|^\frac32 \right)^\frac23\left(\int_{Q_{r_N}} |u|^\frac{10}{3}\right)^\frac{3}{10}r_N^\frac16 \\
&\leq&  \frac{C}{\Theta_0} \|c_0\|_{L^\infty} \sum_{k=1}^N r_k^{-4} r_k^\frac53 \left(C_1 \varepsilon_0^\frac12\right)^\frac12 r_k^\frac83 \left(C_1 \varepsilon_0^\frac12\right)^\frac23 \\&&+~ \frac{C}{\Theta_0} \|c_0\|_{L^\infty} r_{N+1}^{-4} r_N^\frac53 \left(C_1 \varepsilon_0^\frac12\right)^\frac12 r_N^\frac83 \left(C_1 \varepsilon_0^\frac12\right)^\frac23 \\
&\leq& \frac{C}{\Theta_0} \|c_0\|_{L^\infty} C_1^\frac76 \varepsilon_0^\frac7{12}.
\eeno

Collecting $I_1$ to $I_{10}$, there holds
\ben\label{Step1 con} \nonumber
&&\int_{B_{r_3}} (n \ln n ~ \psi)(\cdot,t) +  C^{-1}r_{N+1}^{-3} \int_{Q_{r_3}} |\nabla \sqrt{n}|^2 +\frac{1}{C\Theta_0} r_{N+1}^{-3} \int_{B_{r_3}} (|\nabla \sqrt{c}|^2 )(\cdot,t)\\\nonumber
&&+\frac{1}{C\Theta_0} r_{N+1}^{-3}\int_{Q_{r_3}} |\Delta \sqrt{c}|^2 +\frac{1}{C\Theta_0}\|{c_{0}}\|_{L^{\infty}} r_{N+1}^{-3} \int_{B_{r_3}}(|u|^2)(\cdot,t) \\\nonumber
&&+ \frac{1}{C\Theta_0}\|{c_{0}}\|_{L^{\infty}} r_{N+1}^{-3} \int_{Q_{r_3}} |\nabla u|^2 +\frac{1}{C\Theta_0} r_{N+1}^{-3} \int_{Q_{r_3}} (\sqrt{c})^{-2} |\nabla \sqrt{c}|^4\\\nonumber
&\leq& C \left(1 + \|\chi\|_0 \|c_0\|_{L^\infty}^\frac12 + \Theta_0^{-1} + \Theta_0^{-1} \|c_0\|_{L^\infty} (1+ \|\nabla \phi\|_{L^\infty})\right)\\\nonumber&&\times \left(\varepsilon_0 + C_1^\frac32 \varepsilon_0^\frac34 + C_1^\frac{29}{18} \varepsilon_0^\frac{25}{36} + C_1^\frac76 \varepsilon_0^\frac7{12}\right)\\
&\leq& C( \Theta_0)(1+\|\chi\|_{0})(1+ \|\nabla \phi\|_{L^\infty}+ \|c_0\|_{L^\infty})^2C_1^{\frac{29}{18}}\varepsilon^{\frac7{12}}_0.
\een



{\bf Step 2: Estimate of $r_{N+1}^{-4} \int_{Q_{r_{N+1}}} |P-\bar P|^\frac32$.}

For $0<2r<\rho\leq 1$, let $\eta \geq 0$ be supported in $B_{\rho}$ with $\eta = 1$ in $B_\frac\rho2$, and let
\beno
P_1=\int_{\mathbb{R}^3}\frac1{4\pi|x-y|}\left(\partial_i\partial_j[(u_i-(u_i)_\rho)(u_j-(u_j)_\rho)\eta]+\nabla\cdot(n\nabla\phi\eta)\right)(y,t)dy,
\eeno
where $(u)_\rho$ denotes the mean value of $u $ in $B_\rho$.  Set $P_2 = P - P_1$. Obviously,  $\Delta P_2 = 0$ in $B_{\frac\rho2}$.
Using the Calderon-Zygmund estimate and Riesz potential estimate, we have
\ben\label{ine:CZ estimate p 1 1}
\int_{B_{\rho}}|P_1|^{\frac32}dx\leq C \int_{B_{\rho}}|u-(u)_{B_\rho}|^3+C \rho^{\frac 34}\left(\int_{B_{\rho}}|n\nabla \phi|^\frac65dx\right)^{\frac54}.
\een
Fix $\rho = 1$,
we introduce a new cut-off function $\xi_\ell(x) = \eta(\frac{x}{r_{\ell-1} })$ and $\xi_1 = \eta(x)$. Then by $1 = \sum_{\ell=0}^k (\xi_\ell - \xi_{\ell+1})+ \xi_{k+1}$ in $B_1$ for $1\leq k\leq N$, we have
\beno
r_{N+1}^{-4} \int_{Q_{r_{N+1}}} |P-\bar P|^\frac32 &\leq& r_{N+1}^{-4} \int_{Q_{r_{N+1}}} |P_1-\bar{P}_1|^\frac32 + r_{N+1}^{-4} \int_{Q_{r_{N+1}}} |P_2-\bar{P}_2|^\frac32 \\
&:=& T_1 + T_2.
\eeno
For the term $T_2$, by the property of harmonic function in Lemma \ref{lem 1}, there holds
\beno\label{T2}\nonumber
T_2 &\leq& C r_{{N+1}}^{-4} r_{{N+1}}^{\frac32}\int_{Q_{r_{N+1}}} |\nabla P_2|^{\frac32} \\ \nonumber
&\leq& C r_{{N+1}}^{-\frac52} \frac{r_{N+1}^{3}}{({\rho}-r_{N+1})^{\frac92}}\int_{Q_{{\rho}}} |P_2-\bar{P}_2|^{\frac32}\\ \nonumber
&\leq& C r_{{N+1}}^{-\frac52} \frac{r_{N+1}^{3}}{({\rho}-r_{N+1})^{\frac92}}\int_{Q_{{\rho}}} |P-\bar{P}|^{\frac32} + C r_{{N+1}}^{-\frac52} \frac{r_{N+1}^{3}}{({\rho}-r_{N+1})^{\frac92}}\int_{Q_{{\rho}}} |P_1-\bar{P}_1|^{\frac32} \\
&\leq& C r_{{N+1}}^{\frac12} \int_{Q_{{\rho}}} |P-\bar{P}|^{\frac32}+ C r_{N+1}^\frac92 T_1.
\eeno
By $\eqref{eq:condition}$, we know that 
\ben\label{T2}
T_2 \leq C \varepsilon_0 + C r_{N+1}^\frac92 T_1.
\een
For the term $T_1$, since $1 = \sum_{\ell=0}^k (\xi_\ell - \xi_{\ell+1})+ \xi_{k+1}$, we have
\beno
T_1 &\leq& r_{N+1}^{-4} \int_{Q_{r_{N+1}}} \left|\int_{\mathbb{R}^3}\frac1{4\pi|x-y|}\left(\partial_i\partial_j\left[u_iu_j \left(\sum_{\ell=0}^k (\xi_\ell - \xi_{\ell+1})+ \xi_{k+1}\right) \eta\right]\right)(y,t)dy\right|^\frac32 \\
&& + r_{N+1}^{-4} \int_{Q_{r_{N+1}}} \left|\int_{\mathbb{R}^3}\frac1{4\pi|x-y|}\left(\nabla\cdot\left[n\nabla\phi \left(\sum_{\ell=0}^k (\xi_\ell - \xi_{\ell+1})+ \xi_{k+1}\right) \eta\right]\right)(y,t)dy\right|^\frac32 \\
&:=& T_{11} + T_{12}.
\eeno
By the support set of $\xi_\ell$, we have
\beno
T_{11} &\leq& r_{N+1}^{-4} \int_{Q_{r_{N+1}}} \left|\sum_{\ell=0}^{N-3} \int_{B_{r_{\ell-1}} \setminus B_{r_{\ell+1}}}\frac1{4\pi|x-y|}{\left(\partial_i\partial_j\left(u_iu_j (\xi_\ell - \xi_{\ell+1})\eta\right)\right)}(y,t)dy \right|^\frac32 \\
&&+ r_{N+1}^{-4} \int_{Q_{r_{N+1}}} \left|\int_{B_{r_{N-3}}}\frac1{4\pi|x-y|}{\left(\partial_i\partial_j\left(u_iu_j \xi_{N-2}\eta\right)\right)}(y,t)dy \right|^\frac32 \\
&:=& T_{111} + T_{112}.
\eeno
For the term $T_{111}$, since $(x,t) \in Q_{r_{N+1}}$ and $y \in B_{r_{\ell-1}} \setminus B_{r_{\ell+1}}$, $\ell = 0, 1,\cdots,N-3$, we know that
\beno
|x - y| \geq r_{\ell+3}.
\eeno
Then for the term $T_{111}$, we have
\beno
T_{111} &\leq& Cr_{N+1}^{-4} \int_{Q_{r_{N+1}}} \left(\sum_{\ell=0}^{N-3} r_{\ell+3}^{-3} \int_{B_{r_{\ell-1}}} |u|^2 dy\right)^\frac32 \\
&\leq& Cr_{N+1} \left(\sum_{\ell=0}^{N-3} r_{\ell+3}^{-3} \sup_t \int_{B_{r_{\ell-1}}} |u|^2\right)^\frac32.
\eeno
By $\eqref{ine:induction}$, we have
\beno
T_{111} &\leq& Cr_{N+1} \left(\sum_{\ell=0}^{N-3} C_1 \varepsilon_0^\frac12 \right)^\frac32 \leq CN 2^{-N} \left(C_1 \varepsilon_0^\frac12\right)^\frac32 \leq C \left(C_1 \varepsilon_0^\frac12\right)^\frac32.
\eeno
For the term $T_{112}$, by singular integral theorem, we have 
\beno
T_{112} 
&\leq& Cr_{N+1}^{-4} \int_{I_{r_{N+1}}} \int_{\mathbb{R}^3} ||u|^2 \xi_{N-2}\eta|^\frac32 \leq C r_{N+1}^{-4} \int_{Q_{r_{N-3}}} |u|^3.
\eeno
By $\eqref{ine:induction}$, we derive
\beno
T_{112} \leq Cr_{N+1}^{-4} r_{N-3}^5 \left(C_1 \varepsilon_0^\frac12\right)^\frac32 \leq C \left(C_1 \varepsilon_0^\frac12\right)^\frac32.
\eeno
Collecting $T_{111}$ and $T_{112}$, we have 
\beno
T_{11} \leq C \left(C_1 \varepsilon_0^\frac12\right)^\frac32.
\eeno
The estimate of term $T_{12}$ is same as the term $T_{11}$. By the support set of $\xi_\ell$, we have
\beno
T_{12} &\leq& r_{N+1}^{-4} \int_{Q_{r_{N+1}}} \left|\sum_{\ell=0}^{N-3} \int_{B_{r_{\ell-1}} \setminus B_{r_{\ell+1}}}\frac1{4\pi|x-y|}{\left(\nabla\cdot(n\nabla\phi \left(\xi_\ell - \xi_{\ell+1}\right) \eta)\right)}(y,t)dy \right|^\frac32 \\
&&+ r_{N+1}^{-4} \int_{Q_{r_{N+1}}} \left|\int_{B_{r_{N-3}}}\frac1{4\pi|x-y|}{\left(\nabla\cdot(n\nabla\phi \xi_{N-2} \eta)\right)}(y,t)dy \right|^\frac32 \\
&:=& T_{121} + T_{122}.
\eeno
For the term $T_{121}$, since $|x-y| \geq r_{\ell+3}$, we acquire
\beno
T_{121} &\leq& C\|\nabla \phi\|_{L^\infty}^\frac32 r_{N+1}^{-4} \int_{Q_{r_{N+1}}} \left(\sum_{\ell=0}^{N-3} \int_{B_{r_\ell}} r_{\ell+3}^{-2} n dy \right)^\frac32 \\
&\leq& C\|\nabla \phi\|_{L^\infty}^\frac32 r_{N+1} \left(\sum_{\ell=0}^{N-3} r_{\ell+3}^{-2} \sup_t \int_{B_{r_\ell}} n dy \right)^\frac32.
\eeno
By $\eqref{ine:induction}$, we know
\beno
T_{121} \leq C \|\nabla \phi\|_{L^\infty}^\frac32 \left(C_1 \varepsilon_0^\frac12\right)^\frac32.
\eeno
Similarly, by Riesz potential estimate in Lemma \ref{lem 3}, we achieve
\beno
T_{122} &\leq& Cr_{N+1}^{-4} \int_{I_{r_{N+1}}} \int_{B_{r_{N+1}}} \left|\int_{\mathbb{R}^3} |x-y|^{-2} \left|n\nabla\phi \xi_{N-2} \eta\right|(y,t)dy \right|^\frac32 \\
&\leq& Cr_{N+1}^{-\frac{13}4} \|\nabla \phi\|_{L^\infty}^\frac32 \int_{I_{r_{N-3}}} \left(\int_{B_{r_{N-3}}} n^\frac65\right)^\frac54.
\eeno
Noting that 
\beno
\int_{I_{r_{N-3}}} \left(\int_{B_{r_{N-3}}} n^\frac65\right)^\frac54 \leq C r_{N-3}^\frac54 \left(\int_{Q_{r_{N-3}}} n^\frac53 \right)^\frac9{10},
\eeno
we have 
\beno
T_{122} \leq Cr_{N+1}^{-\frac{13}4} \|\nabla \phi\|_{L^\infty}^\frac32 r_{N-2}^\frac54 r_{N-2}^\frac92 \left(C_1 \varepsilon_0^\frac12\right)^\frac32 \leq C \|\nabla \phi\|_{L^\infty}^\frac32 \left(C_1 \varepsilon_0^\frac12\right)^\frac32.
\eeno
Collecting $T_{121}$ and $T_{122}$, we get 
\beno
T_{12} \leq C \|\nabla \phi\|_{L^\infty}^\frac32 \left(C_1 \varepsilon_0^\frac12\right)^\frac32.
\eeno
The estimates of $T_{11}$ and $T_{12}$ implies that 
\ben\label{T1}
T_1 \leq C (1+\|\nabla \phi\|_{L^\infty}^\frac32) \left(C_1 \varepsilon_0^\frac12\right)^\frac32.
\een
By $\eqref{T1}$ and $\eqref{T2}$ we arrive 
\ben\label{Step2 con}
r_{N+1}^{-4} \int_{Q_{r_{N+1}}} |P-\bar P|^\frac32 \leq C (1+\|\nabla \phi\|_{L^\infty}^\frac32) \left(C_1 \varepsilon_0^\frac12\right)^\frac32 + C \varepsilon_0.
\een

{\bf Step 3: Estimate of  the term $r_{N+1}^{-3} \int_{B_{r_{N+1}}} n$.}

For the equation of $n$, multiplying $(\ref{eq:GKS})_1$ with $\psi$ and integration by parts on ${Q_1}$, we arrive
\ben\label{ine:energy n <x>}
\int_{B_1} (n \psi)(\cdot,t) = \int_{{Q_1}} n  (\partial_t \psi + \Delta \psi)
+ \int_{{Q_1}} n  u \cdot \nabla \psi+ \int_{{Q_1}} n \chi(c) \nabla c \cdot \nabla \psi.
\een
Using the property of $\psi$, we have
\begin{equation*}
\begin{aligned}
\int_{B_{r_{N+1}}} (n \psi)(\cdot,t)& ~\leq~ \int_{{Q_{r_3}}} n  (\partial_t \psi + \Delta \psi)
+ \int_{{Q_{r_3}}} n  u \cdot \nabla \psi+ \int_{{Q_{r_3}}} n \chi(c) \nabla c \cdot \nabla \psi\\&:=~ K_1+K_{2}+K_3.
\end{aligned}
\end{equation*}
For the term $K_1$, noting that $\partial_t \psi + \Delta \psi \leq C$, $\eqref{eq:condition}$ follows that
\beno
K_1 \leq C \int_{Q_{r_3}} n  \leq C \varepsilon_0.
\eeno
The estimates of $K_2$ and $ K_{3} $ are similar to $I_6$, direct calculations imply that
\beno
K_2 &\leq & \sum_{k=1}^N \int_{Q_{r_k} \setminus Q_{r_{k+1}}} nu\cdot \nabla \psi + \int_{Q_{r_{N+1}}}nu\cdot \nabla \psi \\
&\leq& C C_1^\frac32 \varepsilon_0^\frac34.
\eeno
and
\beno
K_3 &\leq & \sum_{k=1}^N \int_{Q_{r_k} \setminus Q_{r_{k+1}}}| n\chi(c)\nabla c \cdot \nabla \psi| + \int_{Q_{r_{N+1}}} |n\chi(c)\nabla c\cdot \nabla \psi| \\
&\leq& C \|\chi\|_0 \|c_0\|_{L^\infty}^\frac12 C_1^\frac32 \varepsilon_0^\frac34.
\eeno
Collecting the estimates of $K_1-K_3$, for any $t \in (-r_{N+1}^2,0)$, we have
\ben\label{Step3 con} \nonumber
\int_{B_{r_{N+1}}} (n\psi)(\cdot,t) &\leq& C\varepsilon_0 + C C_1^\frac32 \varepsilon_0^\frac34+C \|\chi\|_{0}\|c_{0}\|_{L^{\infty}}^{\frac12} C_1^\frac32 \varepsilon_0^\frac34 \\
&\le&  C \left(1+\|\chi\|_0 \|c_0\|_{L^\infty}^\frac12\right) C_1^\frac32 \varepsilon_0^\frac34 + C \varepsilon_0.
\een

{\bf Step 4: Estimate of  the term $r_{N+1}^{-3} \int_{B_{r_{N+1}}} n|\ln n|$.}

In order to prove (\ref{ine:induction}) for $k = N+1$, it's sufficient to estimate the term of  $r_{N+1}^{-3} \int_{B_{r_{N+1}}} (n + n |\ln n|)$.
Combining (\ref{Step1 con}) and (\ref{Step3 con}), for any $t \in (-r_{N+1}^2,0)$, we have
\ben\label{ine:induction 4}\nonumber
&&\int_{B_{r_{N+1}}} (n\psi)(\cdot,t)+\int_{B_{r_{N+1}}} (n \ln n\psi)(\cdot,t) + r_{N+1}^{-3}\int_{Q_{r_{N+1}}} |\nabla \sqrt{n}|^2\\ \nonumber
&& \quad + r_{N+1}^{-3} \int_{B_{r_{N+1}}} (|\nabla \sqrt{c}|^2 )(\cdot,t) + r_{N+1}^{-3}\int_{Q_{r_{N+1}}} |\Delta \sqrt{c}|^2\\ \nonumber
&& \quad + r_{N+1}^{-3}\int_{B_{r_{N+1}}}(|u|^2)(\cdot,t) + r_{N+1}^{-3}\int_{Q_{r_{N+1}}} |\nabla u|^2 + r_{N+1}^{-4} \int_{Q_{r_{N+1}}} |P-\bar P|^\frac32\\ 
&&\leq C( \Theta_0)(1+\|\chi\|_{0})(1+ \|\nabla \phi\|_{L^\infty}+ \|c_0\|_{L^\infty})^2C_1^{\frac{29}{18}}\varepsilon^{\frac7{12}}_0.
\een

Note that $|\ln n| n^{\frac{1}{30}}\leq 30e^{-1}$ for $0<n<1$, then by (\ref{Step3 con}) and (\ref{ine:induction 4}), we have
\ben\label{Step4 con} \nonumber
&&Cr_{N+1}^{-3} \int_{B_{r_{N+1}}} (n |\ln n|)(\cdot,t) dx \\ \nonumber
&\leq& \int_{B_{r_{N+1}}} (n\ln n \psi)(\cdot,t) dx -2\int_{B_{r_{N+1}}\cap \{x;0<n<1\}}(n\ln n \psi)(\cdot,t) dx \\ \nonumber
&\leq& \int_{B_{r_{N+1}}} (n\ln n \psi)(\cdot,t) dx + 60e^{-1} \int_{B_{r_{N+1}}}  (n^{\frac{29}{30}}\psi)(\cdot,t) dx \\ \nonumber
&\leq& C( \Theta_0)(1+\|\chi\|_{0})(1+ \|\nabla \phi\|_{L^\infty}+ \|c_0\|_{L^\infty})^2C_1^{\frac{29}{18}}\varepsilon^{\frac7{12}}_0+ \left(C C_1^\frac32 \varepsilon_0^\frac34 + C \varepsilon_0\right)^{\frac{29}{30}}\\
&\leq&C( \Theta_0)(1+\|\chi\|_{0})(1+ \|\nabla \phi\|_{L^\infty}+ \|c_0\|_{L^\infty})^2C_1^{\frac{29}{18}}\varepsilon^{\frac7{12}}_0,
\een
where we used the integral of heat kernel
\beno
\int_{B_{r_{N+1}}}  \psi dx\leq C.
\eeno

{\bf Step 5: Proof of the term $r_{N+1}^{-3}\int_{Q_{r_{N+1}}} |\nabla^2 \sqrt{c}|^2$.}

Let $\xi$ be a cut-off function, which equals $1$ on $Q_{r_{N+1}}$ and vanishes outside of $Q_{r_{N}}$.  Using integration by parts, we have
\beno\label{nabla2 sqrtc}\nonumber
&&r_{N+1}^{-3}\int_{Q_{r_{N+1}}} |\nabla^2 \sqrt{c}|^2 = r_{N+1}^{-3}\int_{Q_{r_{N+1}}} |\nabla^2 \sqrt{c}|^2\xi^2 \leq r_{N+1}^{-3}\int_{Q_{r_{N}}} |\nabla^2 \sqrt{c}|^2\xi^2 \\ \nonumber
&\leq& r_{N+1}^{-3}\left(\int_{Q_{r_{N}}}|\Delta\sqrt{c}|^2 \xi^2 +\int_{Q_{r_{N}}}\Delta \sqrt{c} \nabla \sqrt{c}\cdot \nabla\xi^2 - \int_{Q_{r_{N}}}\nabla^2\sqrt{c}:(\nabla \sqrt{c}\otimes\nabla\xi^2)\right),
\eeno
which means 
\beno
r_{N+1}^{-3}\int_{Q_{r_{N+1}}} |\nabla^2 \sqrt{c}|^2 
&\leq& Cr_{N+1}^{-3}\left(\int_{Q_{r_{N}}}|\Delta\sqrt{c}|^2 \xi^2 +  \int_{Q_{r_{N}}} |\nabla \sqrt{c}|^2 |\nabla\xi|^2 \right) \\
&:=&L_1+L_2.
\eeno
For the term of $L_1$, by $\eqref{Step1 con}$ and $\eqref{Step4 con}$, we have
\beno
L_1 \leq 8 r_N^{-3} \int_{Q_{r_{N}}}|\Delta\sqrt{c}|^2 .
\eeno
For the term of $L_2$, $\eqref{Step1 con}$ and $\eqref{Step4 con}$ imply that 
\beno
L_2 \leq C r_N^{-3} r_N^5 \sup_t \int_{B_{r_N}} |\nabla \sqrt c|^2.
\eeno
Collecting $L_1$, $L_2$ and by (\ref{Step4 con}), we have 
\ben\label{Step5 con}
r_{N+1}^{-3}\int_{Q_{r_{N+1}}} |\nabla^2 \sqrt{c}|^2 \leq C( \Theta_0)(1+\|\chi\|_{0})(1+ \|\nabla \phi\|_{L^\infty}+ \|c_0\|_{L^\infty})^2C_1^{\frac{29}{18}}\varepsilon^{\frac7{12}}_0.
\een 

{\bf Step 6: Proof of $\eqref{ine:induction}$ for $k = N+1$.}

Combining $\eqref{Step1 con}$, $\eqref{Step2 con}$, $\eqref{Step3 con}$, $\eqref{Step4 con}$ and $\eqref{Step5 con}$, we have 
\beno
&&r_{N+1}^{-3}\sup_{-r_{N+1}^2<t<0} \int_{B_{r_{N+1}}} n  + |n \ln n| + |\nabla \sqrt{c}|^2 + |u|^2 \\
&&+ r_{N+1}^{-3} \int_{Q_{r_{N+1}}} |\nabla \sqrt{n}|^2 + |\nabla^2 \sqrt{c}|^2 + |\nabla u|^2 + r_{N+1}^{-4} \int_{Q_{r_{N+1}}} |P-\bar P|^\frac32 \\
&&\leq C( \Theta_0)(1+\|\chi\|_{0})(1+ \|\nabla \phi\|_{L^\infty}+ \|c_0\|_{L^\infty})^2C_1^{\frac{29}{18}}\varepsilon^{\frac7{12}}_0.
\eeno
Choosing $\varepsilon_0= \frac{\varepsilon_1}{(1+\|\chi\|_{0})^{12}\left( 1+\|\nabla\phi\|_{L^{\infty}}+\|c_{0}\|_{L^{\infty}} \right)^{24}}$ and $\varepsilon_1$ depending $ \Theta_0 $ such that
\begin{equation*}
	C( \Theta_0)(1+\|\chi\|_{0})\left(1+\|\nabla\phi\|_{L^{\infty}}+\|c_{0}\|_{L^{\infty}}  \right)^{2}\varepsilon_0^{\frac{1}{12}}C_{1}^{\frac{11}{18}}\leq 1,
\end{equation*}
then we obtain
\beno
&&r_{N+1}^{-3}\sup_{-r_{N+1}^2<t<0} \int_{B_{r_{N+1}}} n  + |n \ln n| + |\nabla \sqrt{c}|^2 + |u|^2 \\
&&+ r_{N+1}^{-3} \int_{Q_{r_{N+1}}} |\nabla \sqrt{n}|^2 + |\nabla^2 \sqrt{c}|^2 + |\nabla u|^2 + r_{N+1}^{-4} \int_{Q_{r_{N+1}}} |P-\bar P|^\frac32 \\
&&\leq C_1 \varepsilon_0^\frac12.
\eeno

The proof of Proposition \ref{lem:induction} is complete.
\end{proof}

{\bf Proof of Theorem \ref{thm:1}.}
First, it is obvious  that Theorem \ref{thm:1}  under the condition $\eqref{eq:condition}$ follows from Proposition \ref{lem:induction}.

Next, we prove Theorem \ref{thm:1} under the condition $\eqref{eq:condition-32}$. 
Let $$ \varepsilon_2\leq \frac{\varepsilon_1^{2}}{(1+\|\chi\|_{0})^{20}\left( 1+\|\nabla\phi\|_{L^{\infty}}+\|c_{0}\|_{L^{\infty}} \right)^{40}}, $$
and the inequality 
\ben\nonumber
\int_{Q_{1}(z_0)} \left(n^{\frac32}(|\ln n|+1)^\frac32 + |\nabla \sqrt{c}|^3 + |u|^3+|P|^\frac32\right) \leq \varepsilon_2.
\een
comes true.

Recall the local energy inequality from (\ref{local energy inequality}),
letting $\psi$ is a cut-off function on domain $Q_{1}$, which means that $\psi = 1$ on $Q_{\frac12}$ and $\psi = 0$ outside $Q_{1}$. Using H\"{o}lder's inequality, there holds
\ben\label{2,1} \nonumber
&&\int_{B_\frac12} (n \ln n)(\cdot,t) + 4 \int_{Q_{\frac12}} |\nabla \sqrt{n}|^2+\frac{2}{\Theta_0}  \int_{B_\frac12} (|\nabla \sqrt{c}|^2)(\cdot,t)\\ \nonumber
&&+\frac{4}{3\Theta_0}\int_{Q_{\frac12}} |\Delta \sqrt{c}|^2 +\frac{18}{\Theta_0}\|{c_0}\|_{L^{\infty}}\int_{B_\frac12}(|u|^2)(\cdot,t) + \frac{18}{\Theta_0}\|{c_{0}}\|_{L^{\infty}}\int_{Q_{\frac12}} |\nabla u|^2\\ \nonumber
&\leq&  C(\Theta_0)(1+\|\chi\|_{0})\left(1+\|\nabla\phi\|_{L^{\infty}}+\|c_{0}\|_{L^{\infty}}\right)^2 \left(\|n\|_{L^\frac32(Q_1)}^\frac32 + \|n \ln n\|_{L^\frac32(Q_1)} + \|n \ln n\|_{L^\frac32(Q_1)}^\frac32  \right)\\ \nonumber
&&+  C(\Theta_0)(1+\|\chi\|_{0})\left(1+\|\nabla\phi\|_{L^{\infty}}+\|c_{0}\|_{L^{\infty}}\right)^2 \left( \|\nabla \sqrt c\|_{L^3(Q_1)}^2+\|\nabla \sqrt c\|_{L^3{(Q_1)}}^3 \right)\\\nonumber&&+  C(\Theta_0)(1+\|\chi\|_{0})\left(1+\|\nabla\phi\|_{L^{\infty}}+\|c_{0}\|_{L^{\infty}}\right)^2\left( \|u\|_{L^3(Q_1)}^2 + \|u\|_{L^3{(Q_1)}}^3 + \|P\|_{L^\frac32(Q_1)}^\frac32\right)\\
&\leq& C(\Theta_0)(1+\|\chi\|_{0})\left(1+\|\nabla\phi\|_{L^{\infty}}+\|c_{0}\|_{L^{\infty}}\right)^2\varepsilon_2^{\frac23}.
\een
Recall the $\eqref{ine:energy n <x>}$ as follow:
\beno
\int_{B_1} (n \zeta)(\cdot,t) = \int_{{Q_1}} n  (\partial_t \zeta + \Delta \zeta) + \int_{{Q_1}} n  u \cdot \nabla \zeta+ \int_{{Q_1}} n \chi(c) \nabla c \cdot \nabla \zeta.
\eeno
Using H\"{o}lder's inequality, there holds
\ben\label{2,2} \nonumber
\int_{B_\frac12} n &\leq& C (1+\|\chi\|_{0}) \left(\|n\|_{L^\frac32(Q_1)} + \|n\|_{L^\frac32(Q_1)}^\frac32 + \|\nabla \sqrt c\|_{L^{3}(Q_{1})}^3 + \|u\|_{L^3(Q_1)}^3\right) \\
&\leq& C(1+\|\chi\|_{0})\left(1+\|c_{0}\|_{L^{\infty}}^{\frac12}\right)\varepsilon_2^{\frac23}. 
\een
By $\eqref{2,1}$, we have 
\ben\label{2,3} \nonumber
\int_{B_\frac14} (n |\ln n|)(\cdot,t) dx &\leq& \int_{B_\frac12} (n\ln n \zeta)(\cdot,t) dx + 60e^{-1} \int_{B_\frac12}  (n^{\frac{29}{30}}\zeta)(\cdot,t) dx \\
&\leq& C(1 + \|\chi\|_0)\left(1+\|\nabla\phi\|_{L^{\infty}}+\|c_{0}\|_{L^{\infty}}\right)^2\varepsilon_2^{\frac{29}{45}}.
\een
Integrating by parts implies that 
\beno
\int_{Q_\frac14} |\nabla^2 \sqrt{c}|^2 \leq C \left(\int_{Q_\frac12}|\Delta\sqrt{c}|^2 \zeta^2 +\int_{Q_\frac12} |\Delta \sqrt{c}| |\nabla \sqrt{c}| |\nabla\zeta^2| + C \int_{Q_\frac12} |\nabla \sqrt{c}|^2 |\nabla\zeta|^2 \right).
\eeno
Using H\"{o}lder's inequality, there holds
\ben\label{2,4}
&&\int_{Q_\frac14} |\nabla^2 \sqrt{c}|^2 \leq C \left(\|\Delta \sqrt c\|_{L^2(Q_\frac12)}^2 + \|\nabla \sqrt c\|_{L^2(Q_\frac12)}^2\right)\nonumber\\
&& \leq C(1 + \|\chi\|_0)\left(1+\|\nabla\phi\|_{L^{\infty}}+\|c_{0}\|_{L^{\infty}}\right)^2\varepsilon_2^{\frac{29}{45}}.
\een
Collecting $\eqref{2,1}$, $\eqref{2,2}$, $\eqref{2,3}$, $\eqref{2,4}$ and $\eqref{eq:condition-32}$, there holds $\eqref{eq:condition}$ for some $\varepsilon_1$.

To sum up, we complete the proof of Theorem \ref{thm:1}.


\section{Appendix}
\begin{lemma}(See \cite{Lin})\label{mean value property}\label{lem 1}
	 Let $f$ be a harmonic function in $B_1 \subset \mathbb{R}^n$,
for $1\leq p,q \leq \infty$, $0<r<\rho<1$ and $k\geq1$, there holds
\beno
||\nabla^k f||_{L^q(B_r)}\leq C\frac{r^\frac{n}{q}}{(\rho-r)^{\frac np+k}}||f||_{L^p(B_\rho)}.
\eeno
\end{lemma}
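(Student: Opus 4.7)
The plan is to deduce the stated inequality from the classical pointwise Cauchy-type bound for harmonic functions, combined with one application of H\"older's inequality to bring in the $L^p$ norm, and a trivial $L^\infty \hookrightarrow L^q(B_r)$ step to bring in the $r^{n/q}$ factor. The three multiplicative factors $r^{n/q}$, $(\rho-r)^{-n/p-k}$, and $\|f\|_{L^p(B_\rho)}$ appearing on the right-hand side will arise from exactly these three steps, in that order.

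First, I would establish the interior Cauchy estimate
\beno
|\nabla^k f(x)| \leq \frac{C_{n,k}}{s^{n+k}} \int_{B_s(x)} |f(y)|\,dy \quad \text{whenever } B_s(x) \subset B_1.
\eeno
For $k=0$ this is simply the mean value property. For $k=1$ one differentiates the Poisson representation on $B_s(x)$ (or, equivalently, applies the mean value property to the harmonic function $\partial_i f$ on a slightly smaller ball and integrates by parts). Iterating on a chain of nested balls of radii $s,\,s(1-1/k),\,\ldots,\,s/k$ yields general $k$, with $C_{n,k}$ depending only on $n$ and $k$.

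Second, for each $x \in B_r$ I choose $s = \rho - r$, so that $B_s(x) \subset B_\rho$, and apply H\"older's inequality on the ball $B_{\rho-r}(x)$:
\beno
\int_{B_{\rho-r}(x)} |f(y)|\,dy \leq |B_{\rho-r}|^{1/p'} \|f\|_{L^p(B_\rho)} = C_n (\rho-r)^{n/p'} \|f\|_{L^p(B_\rho)},
\eeno
where $1/p + 1/p' = 1$. Substituting into the Cauchy estimate and using $n+k - n/p' = n/p + k$ gives the uniform bound
\beno
\|\nabla^k f\|_{L^\infty(B_r)} \leq \frac{C_{n,k}}{(\rho-r)^{n/p+k}} \|f\|_{L^p(B_\rho)}.
\eeno

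Third, integrating this pointwise constant over $B_r$ contributes a factor $|B_r|^{1/q} \leq C_n r^{n/q}$, which completes the proof. There is no genuine obstacle here: the lemma is classical and the only nontrivial bookkeeping is verifying that the exponents of $\rho-r$ combine to $n/p+k$, which works out precisely because $|B_{\rho-r}| \sim (\rho-r)^n$ while the Cauchy kernel forces a singular factor $(\rho-r)^{-(n+k)}$. The endpoints $p=\infty$ or $q=\infty$ are covered by the same computation using the conventions $1/p = 0$ and $1/q = 0$.
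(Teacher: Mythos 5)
Your proof is correct and is exactly the classical argument the paper has in mind: the paper offers no proof of its own, simply citing Lin, and the lemma's very label (``mean value property'') points to the mean-value/Cauchy-estimate route you take. The bookkeeping $n+k-n/p'=n/p+k$ and the containment $B_{\rho-r}(x)\subset B_\rho$ for $x\in B_r$ both check out, including the endpoint cases $p=1,\infty$ and $q=\infty$.
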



\begin{lemma}(See Theorem 1 of Chapter 5 in \cite{Stein})\label{lem 3}
Assume $0<\alpha<n$, $1\leq p<\frac n{\alpha}$ and,
\beno
I_\alpha f(x)=\int_{\mathbb{R}^n}\frac{f(y)}{|x-y|^{n-\alpha}}dy
\eeno
when $p>1$ and $\frac1q=\frac1p-\frac\alpha n$, there holds
\beno
\|I_\alpha f\|_{L^q}\leq C\|f\|_{L^p}.
\eeno
\end{lemma}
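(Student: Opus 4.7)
The plan is to prove the Hardy--Littlewood--Sobolev inequality for the Riesz potential $I_\alpha$ via the standard ``splitting plus maximal function'' argument. For each $x\in\mathbb{R}^n$ and an auxiliary radius $R>0$ to be chosen later, I would write
\[
I_\alpha|f|(x)=\int_{|x-y|<R}\frac{|f(y)|}{|x-y|^{n-\alpha}}\,dy+\int_{|x-y|\geq R}\frac{|f(y)|}{|x-y|^{n-\alpha}}\,dy=:J_1(x,R)+J_2(x,R),
\]
and estimate the near part $J_1$ by the Hardy--Littlewood maximal function $Mf$, and the far part $J_2$ by H\"older's inequality.

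For $J_1$, I would dyadically decompose the annulus $\{R2^{-k-1}\leq|x-y|<R2^{-k}\}$ and bound each piece by $(R2^{-k-1})^{\alpha-n}\cdot|B(x,R2^{-k})|\cdot(Mf)(x)$; summing the geometric series in $k$ yields $J_1(x,R)\leq C\,R^{\alpha}\,Mf(x)$. For $J_2$, since $p>1$ and $(n-\alpha)p'>n$ (which follows from $p<n/\alpha$), H\"older gives
\[
J_2(x,R)\leq\|f\|_{L^p}\Bigl(\int_{|z|\geq R}|z|^{-(n-\alpha)p'}\,dz\Bigr)^{1/p'}\leq C\,\|f\|_{L^p}\,R^{\alpha-n/p}.
\]
I would then optimize in $R$ by choosing $R$ so that the two bounds balance, i.e.\ $R^{n/p}\sim\|f\|_{L^p}/Mf(x)$, producing the pointwise estimate
\[
|I_\alpha f(x)|\leq C\,(Mf(x))^{1-\alpha p/n}\,\|f\|_{L^p}^{\alpha p/n}.
\]

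Finally, raising both sides to the power $q$ (with $1/q=1/p-\alpha/n$, so that $(1-\alpha p/n)q=p$) and integrating gives
\[
\|I_\alpha f\|_{L^q}^{q}\leq C\,\|f\|_{L^p}^{q-p}\int_{\mathbb{R}^n}(Mf)^p\,dx,
\]
and the Hardy--Littlewood maximal theorem $\|Mf\|_{L^p}\leq C\|f\|_{L^p}$ (valid precisely because $p>1$) yields the claim $\|I_\alpha f\|_{L^q}\leq C\|f\|_{L^p}$.

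The main obstacle is not any single step but rather the sharp tracking of the admissible range: the convergence of the integral in the far-part H\"older bound forces $p<n/\alpha$, and the use of the maximal inequality forces $p>1$, together ensuring $1<q<\infty$; the optimization in $R$ is only legitimate because the exponents $1-\alpha p/n$ and $\alpha p/n$ of the resulting geometric mean are nonnegative. The endpoint case $p=1$ (or $p=n/\alpha$) is excluded precisely because one or the other of these ingredients fails, which is consistent with the hypothesis $1<p<n/\alpha$ in the statement.
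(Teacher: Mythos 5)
Your argument is correct: the splitting into near and far parts, the maximal-function bound $J_1\leq CR^{\alpha}Mf(x)$, the H\"older bound $J_2\leq C\|f\|_{L^p}R^{\alpha-n/p}$ (valid exactly because $p<n/\alpha$), the optimization in $R$ giving the pointwise Hedberg estimate, and the final use of the Hardy--Littlewood maximal theorem for $p>1$ all check out, including the exponent bookkeeping $(1-\alpha p/n)q=p$. The paper itself offers no proof of this lemma---it is quoted directly from Stein---and your argument is essentially the same one given in that cited reference, so there is nothing to add.
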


 \end{document}